\newtheorem{theorem}{\sc{Theorem:}}[section]
\newtheorem{definition}[theorem]{\sc{Definition:}}
\newtheorem{lemma}[theorem]{\sc{Lemma:}}
\newtheorem{proposition}[theorem]{\sc{Proposition:}}
\newtheorem{corollary}[theorem]{\sc{Corollary:}}
\theoremstyle{remark}
\newtheorem{remark}[theorem]{\sc{Remark:}}
\begin{document}

\title{THE MAGNETIC RAY TRANSFORM ON ANOSOV SURFACES}
\date{}
\author[G. Ainsworth]{GARETH AINSWORTH}

\begin{abstract} Assume \((M,g,\Omega)\) is a closed, oriented Riemannian surface equipped with an Anosov magnetic flow. We establish certain results on the surjectivity of the adjoint of the magnetic ray transform, and use these to prove the injectivity of the magnetic ray transform on sums of tensors of degree at most two. In the final section of the paper we give an application to the entropy production of magnetic flows perturbed by symmetric \(2\)-tensors.
\end{abstract}
\keywords{Anosov surface, x-ray transform, geodesics}
\subjclass{53C25, 53C21, 53C22}
\maketitle

\section{Introduction}
\indent \indent Let \((M,g)\) be a closed, oriented Riemannian manifold. Consider the function \(H:TM\rightarrow\mathbb{R}\) given by 
\[H(x,v):=\frac{1}{2}g(v,v),\ \ \ (x,v)\in TM.\]
The geodesic flow on \(TM\) is given by the Hamiltonian flow of the above function with reference to the symplectic structure \(\omega_{0}\) on \(TM\) provided by the pullback, via the metric, of the canonical symplectic form on \(T^{*}M.\) The abstract formulation of a magnetic field imposed on \(M\) is specified by a closed 2-form \(\Omega\). The \textit{magnetic flow}, or twisted geodesic flow, is defined as the Hamiltonian flow of \(H\) under the symplectic form \(\omega\), where \[\omega:=\omega_{0}+\pi^{*}\Omega,\] and \(\pi:TM\rightarrow M\) is the usual projection. Magnetic flows were first studied in \cite{anosov67, arnold61}; for more recent references in relation to inverse problems, see below.\\
\indent We may alternatively think of the magnetic field as being determined by the unique bundle map \(Y:TM\rightarrow TM,\) defined via, \[\Omega_{x}(\xi,\eta)=g(Y_{x}(\xi),\eta),\ \ \ \forall x\in M,\ \forall \xi,\eta\in T_{x}M.\] Note that this implies that \(Y\) is skew-symmetric.
The advantage of this point of view is that it provides a nice description of the generator of the magnetic flow, indeed, one can show that this vector field at \((x,v)\in TM\) is given by \[X(x,v)+Y^{i}_{k}(x)v^{i}\frac{\partial}{\partial v^{k}}.\] Here note that the coefficients of \(Y\) are given by \(Y(\frac{\partial}{\partial x^{j}})=Y^{j}_{i}\frac{\partial}{\partial x^{i}}\), and \(X\) denotes the geodesic vector field. Integral curves of the magnetic flow preserve \(H\), and thus have constant speed. In what follows we will restrict ourselves to working on the unit tangent bundle: \(SM:=H^{-1}(\frac{1}{2})\). This is not a genuine restriction from a dynamical point of view, since other energy levels may be understood by simply changing \(\Omega\) to \(c\Omega\), where \(c\in \mathbb{R}.\) Furthermore, the magnetic geodesics, that is the projection of the integral curves of the magnetic flow to \(M\), are precisely the solutions \(t\mapsto (\gamma(t),\dot{\gamma}(t))\) to the following equation: \[\nabla_{\dot{\gamma}} \dot{\gamma}=Y(\dot{\gamma}).\] For oriented surfaces \(SM\) is an \(S^{1}\)-fibration with a circle action on the fibres inducing a vector field which we shall denote by \(V\). It is a routine exercise to show that for surfaces the generator of the magnetic flow in fact simplifies to: \(X+\lambda V,\) where \(\lambda\in C^{\infty}(M)\) is the unique function satisfying: \(\Omega=\lambda d\mbox{vol}_{g}\) for \(d\mbox{vol}_{g}\) the area form of \(M.\) In this article we will proceed under the assumption that \(M\) is a surface.\\
\indent Given a magnetic flow \(\varphi_{t}\) on \(M\), denote by \(\mathcal{G}(M,g,\Omega)\) the set of periodic orbits. We may define the \textit{magnetic ray transform} of a function \(f\in C^{\infty}(SM)\) by \(I(f):\mathcal{G}(M,g,\Omega)\rightarrow\mathbb{R}\)
\[I(f)(\gamma):=\int_{0}^{T}f(\gamma(t),\dot{\gamma}(t))dt,\ \ \gamma\in\mathcal{G}(M,g,\Omega)\ \mbox{has period }T.\]
Now one is lead to ask about the injectivity of this mapping. That is, assume \(I(f)(\gamma)=0\) for all closed orbits \(\gamma\) of the magnetic flow, is \(f\equiv 0\)? Clearly, if one takes \(f=(X+\lambda V)u\) for \(u\in C^{\infty}(SM)\), then \(I(f)\equiv 0.\) Hence, there is a natural obstruction to injectivity, however, the question remains: are these the only elements in the kernel? In order to characterize the kernel of the ray transform so succintly one would expect to have to impose some condition on the flow itself, so that the space of closed orbits is sufficiently rich. To this end we stipulate that our flow is Anosov. This means that there exists a continuous splitting \(T(SM)=E^{0}\oplus E^{u}\oplus E^{s}\) where \(E^{0}\) is the flow direction, and there are constants \(C>0\) and \(0<\rho<1<\eta\) such that for all \(t>0\) we have
\[\left\|d\varphi_{-t}|_{E^{u}}\right\|\leq C\eta^{-t}\ \ \mbox{and}\ \ \left\|d\varphi_{t}|_{E^{s}}\right\|\leq C\rho^{t}\]
It is shown in \cite{anosov167} that given a measure preserving Anosov flow, its periodic orbits are dense in the space of all orbits. (Magnetic flows on \(SM\) preserve the Liouville measure, induced from the volume form specified below.) 
The smooth Livsic theorem \cite{llave86} indeed shows that given a transitive, Anosov flow \(I(f)=0\) iff there exists \(u\in C^{\infty}(SM)\) such that \((X+\lambda V)u=f\). (Since the magnetic flow is volume preserving, its non-wandering set is all of \(SM\), therefore if the flow is, in addition, Anosov, then it must be transitive.) Now we wish to ask a more refined question about the kernel. In order to do so we need to digress to introduce some Fourier analysis.\\
\indent As previously let \(X\) denote the vector field on \(SM\) generated by the geodesic flow, and \(V\) the vector field induced by the circle action on the fibres. We define a third vector field as follows: \(X_{\bot}:=[X,V]\). The two remaining commutators will play an important role in what follows, and are given by (see \cite{singer67}): \([V,X_{\bot}]=X\), and \([X,X_{\bot}]=-KV\), where \(K\) is the Gaussian curvature of \(M\). A global frame for \(T(SM)\) is given by \(\left\{ X+\lambda V,X_{\bot},V \right\}\). We define a Riemannian metric on \(SM\) by declaring that \(\left\{X+\lambda V,X_{\bot},V\right\}\) form an orthonormal basis, and will denote by \(d\Sigma^{3}\) the volume form of this metric. Note that this volume form is identical to the one obtained by declaring that \(\left\{X,X_{\bot},V\right\}\) form an orthonormal basis.\\
\indent We define an inner product between functions \(u,v:SM\rightarrow \mathbb{C}\) as follows: 
\[\left\langle u,v\right\rangle:=\int_{SM}\left\langle u,v\right\rangle _{\mathbb{C}} d\Sigma^{3}.\]
The \(H^{1}(SM)\) norm of \(u\in C^{\infty}(SM)\) will be defined as: 
\[\left\|u\right\|_{H^{1}(SM)}:=\left\|(X+\lambda V)u\right\|_{L^{2}(SM)}+\left\|X_{\bot}u\right\|_{L^{2}(SM)}+\left\|Vu\right\|_{L^{2}(SM)}+\left\|u\right\|_{L^{2}(SM)}.\]
Note that with respect to the metric on \(SM\) this can be written as \(\left\|u\right\|_{H^{1}(SM)}=\left\|\nabla u\right\|_{L^{2}(SM)}+\left\|u\right\|_{L^{2}(SM)}.\)
Now, \(L^{2}(SM,\mathbb{C})\) decomposes orthogonally as:
\[L^{2}(SM,\mathbb{C})=\bigoplus_{k\in\mathbb{Z}}H_{k}\]
where \(-iV\) acts as \(k\operatorname{Id}\) on \(H_{k}.\) Thus, we can decompose a smooth function \(u:SM\rightarrow\mathbb{C}\) into its Fourier components 
\[u=\sum^{\infty}_{k=-\infty}u_{k}\]
where \(u_{k}\in\Omega_{k}:=C^{\infty}(SM,\mathbb{C})\cap H_{k}.\)\\
\indent Given any symmetric, covariant \(m\)-tensor field \(f=f_{i_{1}\cdots i_{m}}dx^{i_{1}}\otimes\cdots\otimes dx^{i_{m}}\) on \(M\) we can associate a function \(\hat{f}\) on \(SM\) by 
\[\hat{f}(x,v):=f_{i_{1}\cdots i_{m}}v^{i_{1}}\cdots v^{i_{m}}.\]
Now such a \(\hat{f}\) can be decomposed into its Fourier components as \(\hat{f}=\sum_{k=-m}^{m}\hat{f}_{k}\), and in general if any function on \(SM\) has nontrivial Fourier components only for \(-m\leq k\leq m\), then we say that it is of degree \(m\). In what follows we will drop the hat leaving it clear from the context when we mean \(f\) to induce a function on \(SM\).\\
\indent The tensor tomography problem is to determine the kernel of the ray transform when it acts on functions on \(SM\) which are induced by tensor fields. Let us assume at first that \(\Omega=0\), so that we are working with the standard geodesic ray transform. Given our setup the Livsic theorem implies that for any symmetric \(m\)-tensor \(f\) satisfying \(I(f)=0\), there exists \(u\in C^{\infty}(SM)\) such that \(Xu=f\). The right hand side has degree \(m\). Does this imply that \(u\) has degree \(m-1\)? (When \(f\) has degree \(0\) we interpret the question to be: does \(f\equiv 0\)?) This inverse question has received considerable attention recently. It is well known that the geodesic flow of a negatively curved \(n\)-manifold is Anosov (though the converse doesn't hold), moreover in \cite{guillemin80} it is shown that on a closed, oriented, negatively curved surface our question is resolved affirmatively, and furthermore in \cite{croke98} that on a closed, oriented, negatively curved \(n\)-manifold, the same result holds. Ideally, one would like to extend these results to the general Anosov case, and remove the curvature assumption. Partial results in this direction were achieved in \cite{dairbekov03}. In particular, for the Anosov case for surfaces it is shown there that the statement is true for tensors of rank \(m=0,1\), and in \cite{sharafutdinov00} it's shown that it holds for \(m=2\) with the additional assumption that \((M,g)\) has no focal points. Finally, in \cite{paternain12} the case for \(m=2\) has been recently resolved.\\
\indent Now let us proceed to the general magnetic case where \(\Omega\) is arbitrary. In the magnetic setting, the flow couples components of differing degrees, hence the analogous question requires us to consider sums of functions induced by tensors of differing ranks. Therefore, for each \(0\leq i\leq m\) let \(f_{i}\) be a symmetric \(i\)-tensor, inducing a function on \(SM\), and consider \(f=f_{0}+...+f_{m}\). Given our setup the Livsic theorem implies that if \(f\) satisfies \(I(f)=0\), there exists \(u\in C^{\infty}(SM)\) such that \((X+\lambda V)u=f\). The right hand side has degree \(m\). Does this imply that \(u\) has degree \(m-1\)? (When \(f\) has degree \(0\) we interpret the question to be: does \(f\equiv 0\)?)\\
\indent In the general magnetic case most of the results heretofore are only for tensors up to degree \(1\). In \cite{dairbekov05} the above question for surfaces with an Anosov magnetic flow is resolved affimatively for tensors up to degree \(1\). In \cite{dairbekov207} the same statement is proved, but the more general class of Anosov thermostat flows are considered, and in \cite{jane08} some further partial results for thermostats are achieved. In \cite{dairbekov08} it is shown that the Riemannian hypothesis can be weakened to Finsler, and the statement proven for tensors up to degree \(1\) on manifolds of arbitrary dimension. In \cite{dairbekov108} positive results are obtained even when the flow is not Anosov, but simply has no conjugate points.\\
\indent This leads us to the main result of this paper

\begin{theorem} Let \((M,g,\Omega)\) be a closed, oriented Riemannian surface equipped with an Anosov magnetic flow. Let \(f=f_{0}+f_{1}+f_{2}\) where \(f_{i}\) is a symmetric \(i\)-tensor. If \(I(f)=0\), then \(f=(X+\lambda V)a\) where \(a\in C^{\infty}(SM)\) is of degree \(1\).
\label{anosovi2}
\end{theorem}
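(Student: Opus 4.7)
The strategy begins with the smooth Livsic theorem of de la Llave. Since the magnetic flow is Anosov and $I(f)=0$, there exists $u\in C^{\infty}(SM)$ with $(X+\lambda V)u=f$, and it suffices to show that such a primitive has degree $1$; then $a:=u$ yields the conclusion.

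Decompose $u=\sum_{k\in\mathbb{Z}}u_{k}$ into Fourier modes under $V$. Writing $X=\eta_{+}+\eta_{-}$ with raising/lowering operators $\eta_{\pm}:\Omega_{k}\to\Omega_{k\pm 1}$, and noting that $\lambda$ lies in $\Omega_{0}$ (it is pulled back from $M$), the equation $(X+\lambda V)u=f$ decomposes at Fourier level $k$ as
\[
\eta_{+}u_{k-1}+\eta_{-}u_{k+1}+ik\lambda u_{k}=f_{k},
\]
with $f_{k}=0$ for $|k|\ge 3$. The task reduces to proving $u_{k}=0$ for all $|k|\ge 2$.

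To push the Fourier content of $u$ down to degree $1$, I would invoke the surjectivity results for the adjoint $I^{*}$ established earlier in the paper. The point is that the range of $I_{m}^{*}$ supplies a sufficiently rich family of smooth test functions on $SM$: pairing $(X+\lambda V)u=f$ in $L^{2}(SM)$ against such a test function and integrating by parts converts the left-hand side into something controlled by $Iu$, while the right-hand side is controlled by $I(f)=0$. This produces orthogonality relations which, combined with the Fourier recursion above, force the top mode $u_{N}$ (for any $N\ge 2$) to vanish. An induction downward in $|k|$ then removes every remaining mode with $|k|\ge 2$, leaving $u=u_{-1}+u_{0}+u_{1}$.

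The chief obstacle is the coupling term $ik\lambda u_{k}$, which is absent in the purely Riemannian case and ties every Fourier level to its neighbours. Any reduction-of-degree step must treat the full tridiagonal recursion rather than the single identity $\eta_{+}u_{k-1}+\eta_{-}u_{k+1}=0$, so the test functions produced by $I^{*}$-surjectivity have to interact correctly with both the transport operator and the multiplication by $\lambda$. A related subtlety is that a priori $u$ may have infinite Fourier content, so before activating the surjectivity argument at the top mode one needs to control the tail of $(u_{k})_{k\in\mathbb{Z}}$ — naturally via a Pestov-type identity adapted to the magnetic flow, with the Anosov hypothesis used to bound growth of magnetic Jacobi fields. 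Once the tail is controlled and the leading mode is killed, the mapping properties of $\eta_{\pm}$ and the $\Omega_{0}$-nature of $\lambda$ keep the induction tractable down to degree $1$.
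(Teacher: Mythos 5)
Your opening step (smooth Livsic to obtain $u\in C^{\infty}(SM)$ with $(X+\lambda V)u=f$, then reduce to showing $u$ has degree $1$) matches the paper, and you correctly sense that the surjectivity results for $I^{*}$ are the engine. But the central mechanism you propose --- pairing the transport equation against test functions in the range of $I^{*}$ to ``force the top mode $u_{N}$ to vanish'' and then inducting downward in $|k|$ --- is a genuine gap, not just a compressed sketch. An invariant distribution $w$ satisfies $\langle w,(X+\lambda V)\varphi\rangle=0$ for \emph{every} smooth $\varphi$, so pairing it against $(X+\lambda V)u=f$ annihilates the left-hand side identically and yields orthogonality relations on the Fourier modes of $f$ only; it gives no handle whatsoever on an individual mode $u_{N}$ of $u$, and no downward induction on the modes of $u$ ever takes place. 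Indeed the paper never shows $u_{k}=0$ for $|k|\geq 2$ directly: instead it shows that $f$ itself, modulo $(X+\lambda V)$ of a degree-$1$ function, has no holomorphic quadratic part, and then invokes the already-known injectivity for tensors of degree at most $1$ (Dairbekov--Paternain) to conclude that the remaining primitive has degree $0$.

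Concretely, the paper's argument splits $f_{2}=\eta_{+}(v_{1})+q_{2}$ with $q_{2}\in\ker\eta_{-}$ using the ellipticity of $\eta_{+}$, and the whole difficulty is to build an invariant distribution $w=\sum_{k\geq 2}w_{k}$ with $w_{2}=q_{2}$ so that pairing against $(X+\lambda V)(u-v_{1})$ isolates $\|q_{2}\|^{2}$ and forces $q_{2}=0$. This requires much more than Theorem~\ref{theorem55}, which only prescribes the modes $\pm1$ of a two-sided invariant distribution: one needs the one-sided mixed-norm refinement (Theorem~\ref{mixednormtheorem}, obtained by projecting onto positive frequencies and checking invariance survives), the theorem on products of invariant distributions with one-sided Fourier support (Theorem~\ref{distributionestimates}), and Max Noether's theorem to write $q_{2}$ as a span of products of holomorphic $1$-differentials in the non-hyperelliptic case, with the hyperelliptic case handled separately by the argument of Paternain--Salo--Uhlmann. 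None of these ingredients appears in your proposal, and your concern about controlling the tail of $(u_{k})$ via a Pestov identity is not where the difficulty lies: the pairing only ever sees the finitely many Fourier modes of $f$, precisely because the constructed distribution has one-sided Fourier support. Without the one-sided construction, the product theorem, and the final reduction to the degree-$\leq 1$ result, the proposed orthogonality-plus-induction scheme does not close.
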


When dealing with magnetic flows negative Gaussian curvature alone is not sufficient to guarantee that the flow is Anosov. It turns out that the appropriate quantity to consider in the magnetic setting is magnetic curvature, which we follow \cite{burns02} in defining to be \(\mathbb{K}:=K+X_{\bot}(\lambda)+\lambda^{2}\). Now if \((M,g,\Omega)\) has negative magnetic curvature, then the magnetic flow is Anosov \cite{wojtkowski08}, in analogy with the geodesic case. Thus, our theorem above yields the following result, which answers a question initially posed in \cite{jane08}.

\begin{corollary} Let \((M,g,\Omega)\) be a closed, oriented Riemannian surface equipped with a magnetic flow. Suppose that \(\mathbb{K}<0\). Let \(f=f_{0}+f_{1}+f_{2}\) where \(f_{i}\) is a symmetric \(i\)-tensor. If \(I(f)=0\), then \(f=(X+\lambda V)a\) where \(a\in C^{\infty}(SM)\) is of degree \(1\). \label{mainresult}
\end{corollary}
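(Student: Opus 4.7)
The plan is to reduce Corollary \ref{mainresult} directly to Theorem \ref{anosovi2}, since the only difference between the two statements is that the hypothesis of the corollary (negative magnetic curvature) is an explicit geometric condition, while the hypothesis of the theorem (Anosov magnetic flow) is a dynamical one. The bridge between the two is classical.

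First I would recall the definition of the magnetic curvature as given in the introduction, namely
\[\mathbb{K}:=K+X_{\bot}(\lambda)+\lambda^{2},\]
where $K$ is the Gaussian curvature of $M$. This is the surface analogue of the quantity that replaces sectional curvature in the magnetic Jacobi equation, so it is natural to expect that its sign controls the hyperbolicity of the magnetic flow. Next I would invoke the result of Wojtkowski \cite{wojtkowski08} which asserts precisely that if $(M,g,\Omega)$ is a closed oriented surface with $\mathbb{K}<0$ everywhere, then the associated magnetic flow on $SM$ is Anosov. This is the magnetic analogue of the well-known fact that negative Gaussian curvature forces the geodesic flow to be Anosov, and its proof proceeds by constructing an invariant pair of transverse cone fields using the magnetic Jacobi equation.

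With these two ingredients in hand, the proof of the corollary is essentially immediate: the hypothesis $\mathbb{K}<0$ guarantees that $(M,g,\Omega)$ satisfies the hypotheses of Theorem \ref{anosovi2}, so if $f=f_{0}+f_{1}+f_{2}$ satisfies $I(f)=0$, then the theorem produces the desired $a\in C^{\infty}(SM)$ of degree $1$ with $(X+\lambda V)a=f$. There is no genuine obstacle here; the only subtlety worth emphasising is that, unlike in the purely Riemannian case, negativity of $K$ alone does not suffice to produce an Anosov magnetic flow, so it is essential that the correct quantity $\mathbb{K}$ be used, and that one appeals to Wojtkowski's theorem rather than the standard negative-curvature result.
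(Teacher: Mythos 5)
Your proposal is correct and follows exactly the paper's own route: the corollary is deduced from Theorem~\ref{anosovi2} via Wojtkowski's theorem that \(\mathbb{K}<0\) forces the magnetic flow to be Anosov. Nothing further is needed.
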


\begin{remark} In the above theorem and corollary we used \(f_{i}\) to denote a symmetric \(i\)-tensor. This is the only instance where we will use this notation for this purpose - in the remainder of this article we will only ever use \(f_{i}\) to denote the \(i^{th}\) Fourier component of \(f\), a function on \(SM\) (possibly induced by a tensor).
\end{remark}

Concerning the proof of Theorem~\ref{anosovi2} we need only consider the case when the genus of \(M\) is \(\geq 2\). This is a consequence of the fact that the fundamental group of any \(S^{1}\)-bundle over the \(2\)-sphere or torus has polynomial growth, and a classic result of Plante and Thurston \cite{plante72} which says that if an \(S^{1}\)-fibration over a surface carries an Anosov flow, then the fundamental group of the fibration must grow exponentially.\\ 
\indent We will exploit the following lemma to achieve our result. Its proof is given in \cite{paternain12}.

\begin{lemma} Assume the Riemannian surface \((M,g)\) has genus \(\geq 2\). Then \(\eta_{+}:\Omega_{k}\rightarrow \Omega_{k+1}\) is injective for \(k\geq 1\), and \(\eta_{-}:\Omega_{k}\rightarrow \Omega_{k-1}\) is injective for \(k\leq -1\). Here \(\eta_{+}:=\frac{1}{2}(X+iX_{\bot})\) and \(\eta_{-}:=\frac{1}{2}(X-iX_{\bot})\).
\label{injectivity}
\end{lemma}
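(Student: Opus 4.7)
The plan is to translate the statement into one about holomorphic sections of holomorphic line bundles on \(M\), and then invoke the classical vanishing fact that a compact Riemann surface admits no nontrivial holomorphic section of a line bundle of negative degree.

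Since \((M,g)\) is oriented and two-dimensional, the metric determines an integrable almost-complex structure, making \(M\) a compact Riemann surface. For genus \(\geq 2\) the canonical bundle \(K\) has positive degree \(2g-2\), and its inverse \(\kappa := K^{-1}\), the holomorphic tangent bundle, has degree \(2-2g < 0\). The \(S^{1}\)-bundle \(SM \to M\) is the unit circle bundle of \(\kappa\), and the standard theory of associated bundles identifies smooth functions in \(\Omega_{k}\) with smooth sections of \(\kappa^{\otimes k}\) for \(k \geq 0\) and with smooth sections of \(\bar\kappa^{\otimes |k|} \cong K^{\otimes |k|}\) for \(k \leq 0\), matching the convention that \(-iV\) acts by \(k\operatorname{Id}\) on \(\Omega_{k}\).

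The heart of the argument is a local computation showing that, under this identification, the operator \(\eta_{+}: \Omega_{k} \to \Omega_{k+1}\) coincides (up to a nonzero multiplicative factor) with the Cauchy-Riemann operator \(\bar\partial_{\kappa^{\otimes k}}\), post-composed with the pointwise metric isomorphism that maps \((T^{0,1}M)^{*} \otimes \kappa^{\otimes k}\) to \(\kappa^{\otimes(k+1)}\). One performs this verification in isothermal coordinates \(z = x+iy\) with the natural angular coordinate in the fibre: the horizontal field \(X\) and its rotation \(X_{\bot}\) combine into \(X + iX_{\bot}\), which when expressed in the fibrewise trivialisation picks up only terms of type \(\partial/\partial\bar z\) on the base, together with connection terms of weight \(k\) that exactly assemble the covariant \(\bar\partial\)-operator on \(\kappa^{\otimes k}\). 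The commutator relations \([V,X_{\bot}]=X\), \([X,V]=-X_{\bot}\), and \([X,X_{\bot}]=-KV\) guide and confirm this matching. The analogous statement identifies \(\eta_{-}\) acting on \(\Omega_{k}\) for \(k \leq 0\) with a \(\partial\)-operator on \(K^{\otimes|k|}\).

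Once the identification is in place, injectivity is immediate: if \(u_{k} \in \Omega_{k}\) with \(k \geq 1\) satisfies \(\eta_{+} u_{k} = 0\), the corresponding section of \(\kappa^{\otimes k}\) is holomorphic, but \(\deg \kappa^{\otimes k} = k(2-2g) < 0\), so any nontrivial such section would have a zero divisor of negative total degree, a contradiction. Hence \(u_{k} = 0\), and the statement for \(\eta_{-}\) with \(k \leq -1\) follows by complex conjugation (or by repeating the same argument on \(K^{\otimes |k|}\), which again has negative degree). The main obstacle is the coordinate computation identifying \(\eta_{+}\) with the twisted \(\bar\partial\)-operator with the correct connection; once done, the rest is a one-line appeal to classical Riemann surface theory.
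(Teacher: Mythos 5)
Your plan is the same one the paper delegates to \cite{paternain12}: identify \(\Omega_{k}\) with smooth sections of a power of a line bundle determined by the conformal structure, recognize \(\eta_{\pm}\) as Cauchy--Riemann--type operators, and conclude by the classical vanishing of holomorphic sections of a line bundle of negative degree on a compact Riemann surface. That is the correct and intended proof, and the acknowledgment that the real work is the local verification that \(\eta_{+}\) is a twisted \(\bar\partial\)- (or \(\partial\)-) operator is well placed. So the strategy is right; the one place you should be more careful is the bookkeeping of which bundle and which operator you get.

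With the paper's conventions (\(X_{\bot}=[X,V]\), \([V,X_{\bot}]=X\), and \(V u = iku\) on \(\Omega_{k}\)), the associated-bundle identification runs the other way from what you wrote: \(\Omega_{k}\cong C^{\infty}(K^{\otimes k})\) for \(k\geq 0\), where \(K\) is the \emph{canonical} bundle of degree \(2g-2>0\), not the holomorphic tangent bundle. Correspondingly, under this identification the lowering operator \(\eta_{-}\) is the one that becomes \(\bar\partial_{K^{\otimes k}}\) (its kernel consists exactly of the holomorphic \(k\)-differentials, which is used later in the paper via Max Noether's theorem), while \(\eta_{+}\) becomes the \((1,0)\)-part \(\partial\) of the Levi--Civita connection, raising the weight. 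A local check in isothermal coordinates with a flat metric makes this concrete: \(X+iX_{\bot}=2e^{i\theta}\partial_{z}\), so \(\eta_{+}\) carries \(a(z,\bar z)e^{ik\theta}\) to \((\partial_{z}a)e^{i(k+1)\theta}\). Because of this, \(\eta_{+}u=0\) does not directly say that \(u\) is a holomorphic section of a negative-degree bundle; it says \(u\) is anti-holomorphic, i.e.\ \(\bar u\) is a holomorphic section of \(\overline{K^{\otimes k}}\cong K^{\otimes(-k)}\), which has degree \(-k(2g-2)<0\), and therefore \(\bar u=0\), hence \(u=0\). Equivalently, pass to \(\bar u\in\Omega_{-k}\) and use \(\eta_{-}\bar u=\overline{\eta_{+}u}=0\). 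The final conclusion is unchanged, but as written your identification reverses the roles of \(K\) and \(K^{-1}\) and of \(\partial\) and \(\bar\partial\); since the subsequent sections of the paper depend on knowing that \(\ker(\eta_{-}|_{\Omega_{k}})\) for \(k\geq 1\) consists of holomorphic \(k\)-differentials, it is worth fixing this so that the conventions stay consistent.
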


\indent Another crucial component in our proof is the Pestov Identity (see \cite{dairbekov207} for a succinct proof) which has recently been used in various guises in the resolution of inverse problems, see for example \cite{dairbekov08, paternain111, paternain211}.

\begin{theorem}[\sc{Pestov's Identity}] Let \((M,g,\Omega)\) be a closed, oriented Riemannian surface equipped with a magnetic flow. If \(u\in C^{\infty}(SM)\) then the following holds:
\[\left\|V((X+\lambda V)u)\right\|^{2}=\left\|(X+\lambda V)Vu\right\|^{2}-(\mathbb{K}Vu,Vu)+\left\|(X+\lambda V)u\right\|^{2}.\]
\end{theorem}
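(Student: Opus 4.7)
Write $F := X + \lambda V$ for the magnetic generator. The plan is to reduce everything to two commutator computations and then perform integration by parts, using that $X$, $X_\bot$, $V$ (and hence $F$) are divergence-free with respect to $d\Sigma^{3}$, so that each is skew-adjoint on $L^{2}(SM)$. First I would establish the two commutator identities that make the whole argument work. Since $\lambda\in C^{\infty}(M)$ satisfies $V\lambda=0$, the formula $[V,X]=-X_{\bot}$ gives immediately
\[
[V,F]=-X_{\bot}.
\]
Using $[X,X_{\bot}]=-KV$, $[V,X_{\bot}]=X$, and $X=F-\lambda V$, a direct expansion of $[F,X_{\bot}]=[X,X_{\bot}]+[\lambda V,X_{\bot}]$ gives
\[
[F,X_{\bot}]=\lambda F-\mathbb{K}V,
\]
and it is precisely this identity that motivates the definition $\mathbb{K}=K+X_{\bot}(\lambda)+\lambda^{2}$.

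Next, I would use the first commutator to rewrite $VFu=FVu-X_{\bot}u$ and expand the square:
\[
\|VFu\|^{2}=\|FVu\|^{2}-2(FVu,X_{\bot}u)+\|X_{\bot}u\|^{2}.
\]
Comparing with the claimed identity, the problem reduces to proving the single scalar identity
\[
2(FVu,X_{\bot}u)=\|X_{\bot}u\|^{2}-\|Fu\|^{2}+(\mathbb{K}Vu,Vu).
\]

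The main step, and in my view the main obstacle (since the various cancellations are delicate and easy to bungle with signs), is to verify this last identity. My plan is to integrate by parts in the left-hand side using the skew-adjointness of $F$, getting
\[
2(FVu,X_{\bot}u)=-2(Vu,FX_{\bot}u)=-2(Vu,X_{\bot}Fu)-2(Vu,[F,X_{\bot}]u).
\]
In the first term I would commute $X_{\bot}$ past $V$ by $[V,X_{\bot}]=X$ and then integrate by parts again, producing the combination $(X_{\bot}u,FVu)$ (i.e.\ the same cross term I started with) together with boundary pieces. In the second term, the commutator formula $[F,X_{\bot}]=\lambda F-\mathbb{K}V$ produces $(\lambda Vu,Fu)$ and $(\mathbb{K}Vu,Vu)$; combined with the leftover term $(Xu,Fu)$ from the first step, these assemble into $(Fu,Fu)=\|Fu\|^{2}$ because $F=X+\lambda V$. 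Collecting terms yields
\[
4(FVu,X_{\bot}u)=2\|X_{\bot}u\|^{2}-2\|Fu\|^{2}+2(\mathbb{K}Vu,Vu),
\]
which is the required identity. Substituting back into the expansion of $\|VFu\|^{2}$ gives Pestov's identity as stated.
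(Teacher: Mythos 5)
Your proof is correct: the two commutator identities $[V,X+\lambda V]=-X_{\bot}$ and $[X+\lambda V,X_{\bot}]=\lambda(X+\lambda V)-\mathbb{K}V$, the skew-adjointness of $X+\lambda V$, $X_{\bot}$, $V$ on the closed manifold, and the collected identity $2((X+\lambda V)Vu,X_{\bot}u)=\|X_{\bot}u\|^{2}-\|(X+\lambda V)u\|^{2}+(\mathbb{K}Vu,Vu)$ all check out and give the stated identity (for complex-valued $u$ one simply replaces the cross terms by their real parts, and on a closed manifold there are of course no boundary pieces). The paper offers no proof of its own, deferring to \cite{dairbekov207}, and your argument is essentially that standard one -- structure equations plus integration by parts, equivalent to computing the commutator of $V(X+\lambda V)$ with its adjoint -- so there is nothing further to add.
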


\indent In the case where one has non-empty boundary the geodesic tensor tomography question has been fully resolved for \textit{simple} surfaces by Paternain, Salo and Uhlmann in \cite{paternain211}, where a compact Riemannian surface with non-empty boundary is said to be simple if its boundary is strictly convex and given any point \(p\in M\) the exponential map \(\operatorname{exp}_{p}\) is a diffeomorphism onto \(M\). The definition was originally motivated by the boundary rigidity problem \cite{michel80}. The magnetic tensor tomography problem was resolved by extending these techniques to \textit{simple magnetic systems} in \cite{ainsworth12}. We say \((M,g,\Omega)\) is a simple magnetic system if (in analogy with a simple surface) its boundary is strictly magnetic convex and given any point \(p\in M\) the magnetic exponential map \(\operatorname{exp}^{\Omega}_{p}\) is a diffeomorphism onto \(M\) - see \cite{dairbekov107} for further details. As is explained in \cite{paternain211}, in order to get injectivity of \(I_{k}\) one need only assume that \(I_{0}\) and \(I_{1}\) are injective, and that \(I_{0}^{*}\) is surjective - here by \(I_{k}\) we mean the ray transform restricted to smooth functions on \(SM\) induced by symmetric \(k\)-tensors. (It is shown in \cite{mukhometov77} and \cite{anikonov97} respectively that \(I_{0}\) and \(I_{1}\) are injective on simple surfaces, and in \cite{pestov05} it is shown that \(I_{0}^{*}\) is surjective on simple surfaces.) This leads one to investigate the surjectivity of the adjoint of the ray transform in the closed surface case. One major discrepancy with the boundary case is that in the closed case there exist no non-trivial solutions to: \((X+\lambda V)u=0\) which lie in \(L^{2}(SM)\). This is because an Anosov, volume preserving flow is necessarily ergodic, and so the only \(u\in L^{2}(SM)\) which are invariant under the flow are the constants. This is in contrast with the boundary case \cite{pestov05}. Hence, the optimal integral regularity for solutions to \((X+\lambda V)u=0\) is \(H^{-1}(SM)\). For this reason we pause to introduce distributions.\\
\indent Denote by \(\mathcal{D}'(SM)\) the space of distributions, or continuous linear functionals, on \(C^{\infty}(SM)\). Any vector field \(W\) can act on a distribution \(\mu\in\mathcal{D}'(SM)\), via duality. That is, \((W\mu)\varphi:=\mu(W\varphi)\) for any \(\varphi\in C^{\infty}(SM)\). This leads us to define the space of distributions invariant under the magnetic flow:
\[\mathcal{D}'_{\operatorname{inv}}(SM):=\left\{\mu\in\mathcal{D}'(SM):(X+\lambda V)\mu=0\right\}.\]
It is explained in \cite{paternain12} how we can without loss of generality consider the magnetic ray transform as the following map:
\[I:C^{\infty}(SM)\rightarrow L(\mathcal{D}'_{\operatorname{inv}}(SM),\mathbb{R}),\ \ If(\mu):=\mu(f).\]
The adjoint of \(I\) is given by the map:
\[I^{*}:\mathcal{D}'_{\operatorname{inv}}(SM)\rightarrow \mathcal{D}'(SM),\ \ (I^{*}\mu)\varphi=\mu(I\varphi)\ \ \forall\varphi\in C^{\infty}(SM).\]  
Note that we can use duality to decompose a distribution into its Fourier components in the same way that we decompose a function into its Fourier components. That is, \(\mu_{k}(\varphi):=\mu(\varphi_{k}),\ \ \forall\varphi\in C^{\infty}(SM)\). With this preparation we may now proceed to state two results on the surjectivity of \(I^{*}\). Firstly, we have surjectivity for \(I_{0}^{*}\).

\begin{theorem} Let \((M,g,\Omega)\) be a closed, oriented Riemannian surface equipped with an Anosov magnetic flow. If \(f\in C^{\infty}(M)\), then there exists \(w\in H^{-1}(SM)\) with \((X+\lambda V)w=0\) and \(w_{0}=f\).
\label{theorem14}
\end{theorem}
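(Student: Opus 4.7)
The plan is to construct $w$ via Hahn-Banach extension of a carefully chosen linear functional, following the strategy used for the geodesic case in \cite{paternain12}. Consider the subspace
\[
S := C^{\infty}(M) + (X+\lambda V)\bigl(C^{\infty}(SM)\bigr) \subset H^{1}(SM),
\]
and define a linear functional $L:S\to\mathbb{R}$ by
\[
L\bigl(\psi + (X+\lambda V)u\bigr) := \int_{M} f\,\psi\,d\mathrm{vol}_{g},\qquad \psi\in C^{\infty}(M),\ u\in C^{\infty}(SM).
\]
The first task is to check that $L$ is well defined. If $\psi_{1}-\psi_{2} = (X+\lambda V)(u_{2}-u_{1})$ for some $\psi_{i}\in C^{\infty}(M)$, then $\psi_{1}-\psi_{2}$ lies in the image of the flow derivative, and (since $\psi_{1}-\psi_{2}$ is of degree zero) the fundamental theorem of calculus along closed orbits forces $I_{0}(\psi_{1}-\psi_{2})=0$. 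Injectivity of $I_{0}$ on $C^{\infty}(M)$ for Anosov magnetic flows \cite{dairbekov05} then yields $\psi_{1}=\psi_{2}$, so $L$ is well defined and the sum defining $S$ is in fact direct.

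The heart of the proof is to establish that $L$ is bounded with respect to the $H^{1}(SM)$ norm, i.e.\ to prove an estimate
\[
\bigl|\langle f,\psi\rangle_{L^{2}(M)}\bigr| \leq C\,\bigl\|\psi + (X+\lambda V)u\bigr\|_{H^{1}(SM)}
\]
for every $\psi\in C^{\infty}(M)$ and every $u\in C^{\infty}(SM)$, with $C$ depending only on $f$ and the magnetic system. Setting $h = \psi + (X+\lambda V)u$ and unpacking Fourier modes gives
\[
h_{0} = \psi + \eta_{+}u_{-1} + \eta_{-}u_{1},\qquad h_{k} = \eta_{+}u_{k-1} + \eta_{-}u_{k+1} + ik\lambda u_{k}\ \ (k\neq 0),
\]
so the estimate reduces to controlling each $u_{k}$ (and hence $\psi = h_{0}-\eta_{+}u_{-1}-\eta_{-}u_{1}$) in $L^{2}$ by $\|h\|_{H^{1}(SM)}$. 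The plan here is to use Pestov's identity to convert $H^{1}$-control on $h = (X+\lambda V)u + \psi$ into $L^{2}$-control on $Vu$ (and therefore on the Fourier components $u_{k}$ for $|k|\geq 1$), and to invert the tridiagonal system above using the injectivity of $\eta_{+}$ on $\Omega_{k}$ ($k\geq 1$) and of $\eta_{-}$ on $\Omega_{k}$ ($k\leq -1$) provided by Lemma \ref{injectivity}; Plante-Thurston ensures we may restrict to genus $\geq 2$ where that lemma applies. The Anosov hypothesis enters through the underlying positivity/negativity structure that makes Pestov's identity an effective estimator in this setting.

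Once $L$ is shown to be bounded, Hahn-Banach produces a bounded extension to $H^{1}(SM)$, which is precisely an element $w\in H^{-1}(SM)$. The verifications are then routine: for any $u\in C^{\infty}(SM)$ one has $w((X+\lambda V)u) = L((X+\lambda V)u) = 0$, and since the magnetic flow preserves $d\Sigma^{3}$ we have $(X+\lambda V)^{*} = -(X+\lambda V)$, giving $(X+\lambda V)w = 0$ in $\mathcal{D}'(SM)$; for $\psi\in C^{\infty}(M)\hookrightarrow C^{\infty}(SM)$ we have $w(\psi) = L(\psi) = \int_{M} f\psi\,d\mathrm{vol}_{g}$, which (modulo the normalisation from fiber integration) is precisely the assertion $w_{0}=f$.

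The main obstacle is the stability estimate in the boundedness step: one must bound all the $u_{k}$ simultaneously against a single $H^{1}$ norm on $h$, and translate Pestov's identity — an integral statement on $SM$ — into bounds compatible with the tridiagonal recursion above while absorbing the terms involving $\lambda$ and the non-trivial coupling at $k=\pm 1$ through $\psi$.
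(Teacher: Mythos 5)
Your overall duality framework (define a functional on \(C^{\infty}(M)+(X+\lambda V)C^{\infty}(SM)\), prove it is bounded in the \(H^{1}(SM)\) norm, extend by Hahn--Banach) is sound and close in spirit to what the paper does, and your well-definedness argument and final verifications are essentially fine. The genuine gap is that the entire analytic content of the theorem sits in the boundedness estimate, which you do not prove, and the route you sketch for it would not work as stated: you propose to recover each Fourier mode \(u_{k}\) by inverting the tridiagonal system using the injectivity of \(\eta_{\pm}\) from Lemma~\ref{injectivity}, but injectivity alone carries no quantitative bound, that lemma is not used anywhere in this part of the paper (it only enters the degree-two tensor argument of Section~\ref{sec:inj}), and there is no evident way to turn mode-by-mode control into a single estimate against \(\left\|h\right\|_{H^{1}(SM)}\). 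The mechanism that actually makes the estimate work is the Anosov hypothesis entering through the Riccati solutions \(r^{\pm}\): Theorem~\ref{theorem32} shows \((M,g,\Omega)\) is \(\alpha\)-controlled with \(\alpha>0\), and combining this with the Pestov identity gives the stability estimate \(\left\|u\right\|_{H^{1}(SM)}\leq C\left\|Pu\right\|_{L^{2}(SM)}\) for \(u\in C^{\infty}_{\diamond}(SM)\), where \(Pu=V((X+\lambda V)u)\) (Lemma~\ref{lem:42}). This is precisely the missing ingredient in your sketch.

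With Lemma~\ref{lem:42} in hand your functional is in fact easily bounded: given \(h=\psi+(X+\lambda V)u\) with \(\psi\in C^{\infty}(M)\), normalize \(u\) to have zero average (this does not change \((X+\lambda V)u\)); since \(V\psi=0\) one has \(Pu=Vh\), hence \(\left\|u\right\|_{H^{1}(SM)}\leq C\left\|Vh\right\|_{L^{2}(SM)}\leq C\left\|h\right\|_{H^{1}(SM)}\), and then \(\left\|\psi\right\|_{L^{2}}\leq\left\|h\right\|_{L^{2}}+\left\|(X+\lambda V)u\right\|_{L^{2}}\leq C'\left\|h\right\|_{H^{1}(SM)}\), which gives \(|L(h)|\leq\left\|f\right\|_{L^{2}}\left\|\psi\right\|_{L^{2}}\leq C''\left\|h\right\|_{H^{1}(SM)}\). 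The paper packages the same duality differently: it proves surjectivity of \(P^{*}=(X+\lambda V)V\) from \(L^{2}(SM)\) onto \(H^{-1}_{\diamond}(SM)\) (Lemma~\ref{lemma43}, itself a Hahn--Banach plus Riesz representation argument resting on Lemma~\ref{lem:42}), solves \(P^{*}h=-(X+\lambda V)f\), and writes the invariant distribution explicitly as \(w=Vh+f\), from which \((X+\lambda V)w=0\) and \(w_{0}=f\) are immediate. So your architecture can be salvaged, but as written the key estimate is unproven and the substitute strategy you propose for it (injectivity of \(\eta_{\pm}\) plus a tridiagonal inversion) would fail.
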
    

The next theorem relates to the surjectivity of the adjoint of the ray transform restricted to functions on \(SM\) induced by \(1\)-forms. To give some intuition behind the technical condition, in \cite{paternain12} it is shown that \(a_{-1}+a_{1}\in\Omega_{-1}\oplus\Omega_{1}\) satisfies \(\eta_{+}a_{-1}+\eta_{-}a_{1}=0\) iff the \(1\)-form \(\sigma\) associated with \(a_{-1}+a_{1}\) is solenoidal, in the sense that \(\delta \sigma =0\).  

\begin{theorem} Let \((M,g,\Omega)\) be a closed, oriented Riemannian surface equipped with an Anosov magnetic flow. If \(a_{-1}+a_{1}\in\Omega_{-1}\oplus\Omega_{1}\) satisfies \(\eta_{+}a_{-1}+\eta_{-}a_{1}=0\), then there exists \(w\in H^{-1}(SM)\) such that \((X+\lambda V)w=0\) and \(w_{-1}+w_{1}=a_{-1}+a_{1}\).
\label{theorem55}
\end{theorem}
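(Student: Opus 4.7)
The plan is to mimic the Hahn--Banach construction behind Theorem~\ref{theorem14}, now carrying as data the degree-one object $a := a_{-1}+a_{1}$. Set
\[
\mathcal{Z} := (X+\lambda V)\bigl(H^{2}(SM)\bigr) + \Omega_{-1}\oplus\Omega_{1} \subset H^{1}(SM),
\]
and define $\ell \colon \mathcal{Z}\to\mathbb{C}$ by
\[
\ell\bigl((X+\lambda V)\varphi + b_{-1}+b_{1}\bigr) := \langle b_{-1},a_{-1}\rangle + \langle b_{1},a_{1}\rangle.
\]
The goal is to show that $\ell$ is well defined and continuous in the $H^{1}(SM)$-norm; a Hahn--Banach extension to all of $H^{1}(SM)$ then produces $w\in H^{-1}(SM)$ which by construction annihilates every $(X+\lambda V)\varphi$ (so $(X+\lambda V)w=0$) and whose Fourier modes $w_{\pm 1}$ coincide with $a_{\pm 1}$.

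Well-definedness is precisely where the solenoidal hypothesis enters. Suppose $(X+\lambda V)\psi = c_{-1}+c_{1}$ for some $\psi\in H^{2}(SM)$ and $c_{\pm 1}\in\Omega_{\pm 1}$; I must show $\langle c_{-1},a_{-1}\rangle + \langle c_{1},a_{1}\rangle = 0$. Since the right-hand side is smooth, the smooth Livsic theorem upgrades $\psi$ to a smooth function modulo a constant. The Dairbekov--Paternain injectivity result for tensors of degree at most one on Anosov magnetic surfaces \cite{dairbekov05} then forces $c_{-1}+c_{1} = Xh = (X+\lambda V)h$ for some $h\in C^{\infty}(M)$, so that $c_{-1}=\eta_{-}h$ and $c_{1}=\eta_{+}h$. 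Because $X$ and $X_{\bot}$ are anti-symmetric with respect to the $L^{2}(SM,d\Sigma^{3})$ inner product, one has $\eta_{\pm}^{*}=-\eta_{\mp}$, and therefore
\[
\langle c_{-1},a_{-1}\rangle + \langle c_{1},a_{1}\rangle = \langle \eta_{-}h,a_{-1}\rangle + \langle \eta_{+}h,a_{1}\rangle = -\langle h,\eta_{+}a_{-1}+\eta_{-}a_{1}\rangle = 0.
\]

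The crux is the continuity estimate
\[
\bigl|\langle b_{-1},a_{-1}\rangle + \langle b_{1},a_{1}\rangle\bigr| \le C\,\bigl\|(X+\lambda V)\varphi + b_{-1}+b_{1}\bigr\|_{H^{1}(SM)}
\]
with $C$ independent of $\varphi\in H^{2}(SM)$ and $b_{\pm 1}\in\Omega_{\pm 1}$. Since $(X+\lambda V)h = Xh$ for $h\in C^{\infty}(M)$, I can absorb the \emph{potential} part of $b:=b_{-1}+b_{1}$ into the $(X+\lambda V)\varphi$ term, and the remaining \emph{solenoidal} part is $L^{2}$-orthogonal to the potentials by the computation just made. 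Thus it suffices to establish the stability estimate
\[
\|b\|_{L^{2}(SM)} \le C\,\bigl\|(X+\lambda V)\varphi + b\bigr\|_{H^{1}(SM)}
\]
for solenoidal $b\in\Omega_{-1}\oplus\Omega_{1}$ and arbitrary $\varphi\in H^{2}(SM)$, after which Cauchy--Schwarz against $a$ completes the boundedness argument.

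I expect this stability estimate to be the main obstacle. My plan for tackling it is to combine the Pestov identity stated after Lemma~\ref{injectivity} with the Anosov hypothesis. Pestov delivers quadratic control of $V$-derivatives by $(X+\lambda V)$-derivatives, while the Anosov dynamics, through mixing and the resulting closed-range character of $(X+\lambda V)$ between Sobolev spaces on $SM$, rule out near-invariant states other than constants. In combination these should produce the coercivity separating the $\Omega_{-1}\oplus\Omega_{1}$ modes from the range of $(X+\lambda V)$ in the $H^{1}$ topology. Once the stability estimate is in place, Hahn--Banach furnishes the desired $w\in H^{-1}(SM)$ and Theorem~\ref{theorem55} follows.
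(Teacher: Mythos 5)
Your functional-analytic skeleton (define a linear functional on \((X+\lambda V)(H^{2})+\Omega_{-1}\oplus\Omega_{1}\), check well-definedness and boundedness, extend by Hahn--Banach) is sound and is close in spirit to what the paper does via Lemma~\ref{lemma54}; your well-definedness step is also workable, although where you invoke ``the smooth Livsic theorem'' what you really need is the regularity theorem for the cohomological equation (an \(H^{2}\), hence continuous, solution with smooth right-hand side is smooth), and your pairing should be the bilinear one (or against \(\overline{a_{\pm 1}}\)) if you want \(w_{\pm 1}=a_{\pm 1}\) on the nose. The genuine gap is exactly the step you flag and then defer: the stability estimate \(\left\|b\right\|_{L^{2}}\leq C\left\|(X+\lambda V)\varphi+b\right\|_{H^{1}}\) for solenoidal \(b\in\Omega_{-1}\oplus\Omega_{1}\) and arbitrary \(\varphi\in H^{2}\) is never proved, and the plan offered for it is not an argument. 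Mixing of the Anosov flow does not yield any ``closed-range character of \(X+\lambda V\) between Sobolev spaces'' that one can simply invoke; no such statement holds off the shelf in the standard \(H^{s}\) scale, and in fact your estimate is essentially equivalent in strength to Theorem~\ref{theorem55} itself (given the theorem with the natural norm bound, one deduces your estimate by pairing \((X+\lambda V)\varphi+b\) against the invariant distribution associated to \(b\)). So as written the proposal postpones the entire analytic content of the theorem to an unproven coercivity principle.

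For comparison, the paper's route makes this coercivity quantitative and specific. The Anosov hypothesis enters through the Riccati solutions \(r^{\pm}\) with \(r^{+}-r^{-}>0\), giving the \(\alpha\)-controlled inequality of Theorem~\ref{theorem32}; this is combined with the Pestov identity and a Fourier-mode computation (Lemma~\ref{lem:42} and, crucially, Proposition~\ref{prop:new}) to prove \(\left\|u\right\|_{H^{1}(SM)}\leq C\left\|Qu\right\|_{L^{2}(SM)}\) for \(u\in\bigoplus_{|k|\geq 1}\Omega_{k}\), where \(Qu=TV((X+\lambda V)u)\); Lemma~\ref{lemma54} then solves \(Q^{*}h=-(X+\lambda V)(a_{-1}+a_{1})\) -- the hypothesis \(\eta_{+}a_{-1}+\eta_{-}a_{1}=0\) is used only to ensure the zeroth Fourier mode of this right-hand side vanishes, with no appeal to Livsic theory or to the degree-one injectivity result -- and \(w:=VTh+a_{-1}+a_{1}\) has the required properties. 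To complete your argument you would need to prove your stability estimate by essentially these means (Riccati/\(\alpha\)-control plus Pestov plus mode-by-mode estimates), or equivalently dualize your construction into the paper's; the Hahn--Banach packaging alone does not carry the load.
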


Using the ideas from \cite{paternain12} in Section~\ref{sec:inj} we make judicious use of Theorem~\ref{theorem55} to resolve the tensor tomography problem for sums of tensors of degree at most \(2\).\\
\indent The final result in this article is to give an application of our methods to explore a question related to the entropy production of magnetic Anosov flows perturbed by symmetric \(2\)-tensors. In order to state our theorem we introduce the following concepts \cite{ruelle2}:

\begin{definition} Given a smooth manifold \(N\) equipped with a flow \(\psi_{t}\) with generator \(\mathbf{G}\). We say that a \(\psi_{t}\)-invariant measure \(\rho\) is an SRB measure if \(\rho\) is ergodic and 
\[h_{\rho}(\psi_{t}):=\sum\mbox{positive Lyapunov exponents},\]
where \(h_{\rho}(\psi_{t})\) is the measure theoretic entropy of \(\psi_{t}\) with respect to \(\rho\). The entropy production of the measure \(\rho\) is defined to be:
\[e_{\psi_{t}}(\rho):=-\int_{N} \operatorname{div}\mathbf{G}\,d\rho=-\sum\mbox{Lyapunov exponents}.\]
\end{definition} 

\begin{remark} It has been shown in \cite{ruelle2} that \(e_{\psi_{t}}(\rho)\geq 0\) with equality iff
\[h_{\rho}(\psi_{t})=\sum\mbox{positive Lyapunov exponents}=-\sum\mbox{negative Lyapunov exponents}.\]
\end{remark}

\begin{remark} When one has an Anosov flow on a surface a result of Ghys \cite{ghys84} implies it is transitive and topologically mixing, moreover, for such a flow the SRB measure is known to be unique \cite{katok}.
\end{remark}

\indent  In our situation we have \((M,g,\Omega)\) an oriented Riemannian surface equipped with an Anosov magnetic flow on \(SM\). Now suppose \(q\) is a fixed symmetric covariant \(k\)-tensor on \(M\). We use \(q\) to determine another flow on \(SM\), denoted by \(\varphi^{sq}_{t}\) with generator: 
\[\mathbf{G}_{s}:=X+(\lambda +sq)V.\]
For \(\varepsilon\) sufficiently small and \(s\in(-\varepsilon,\varepsilon)\), then \(\varphi^{sq}_{t}\) will remain a transitive, weakly mixing Anosov flow by structural stability. We refer to this as the magnetic flow perturbed by \(q\). \\
\indent Finally, note that for all \(s\in(-\varepsilon,\varepsilon)\) the map:
\[e(s):=e_{\varphi^{sq}_{t}}(\rho_{s})\]
is smooth by the results of \cite{ruelle3}. Here \(\rho_{s}\) is the unique SRB measure. This preamble allows us to state our theorem.

\begin{theorem} Let \((M,g,\Omega)\) be a closed, oriented Riemannian surface equipped with an Anosov magnetic flow which we shall denote by \(\varphi_{t}\). Fix a symmetric covariant \(2\)-tensor \(q\). The perturbed flow \(\varphi^{sq}_{t}\) has zero entropy production iff \(V(q)\) is purely potential, that is, iff there exists \(u\in C^{\infty}(SM)\) of degree \(1\), such that \((X+\lambda V)u=V(q)\).
\label{entropy}
\end{theorem}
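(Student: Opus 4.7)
The plan is to reformulate zero entropy production as a cohomological condition on $V(q)$ that can be read off from Theorem~\ref{anosovi2}. First, compute the divergence of $\mathbf{G}_s$: since $V$ generates the circle action on fibres it is divergence-free with respect to $d\Sigma^3$, and since $\lambda$ is pulled back from $M$ one has $\operatorname{div}(X+\lambda V) = 0$; together with $\operatorname{div}(qV) = V(q)$ this gives $\operatorname{div}(\mathbf{G}_s) = sV(q)$, whence
\[
e(s) = -\int_{SM}\operatorname{div}(\mathbf{G}_s)\,d\rho_s = -s\int_{SM} V(q)\,d\rho_s.
\]
For the transitive Anosov flow $\varphi^{sq}_t$ the standard equivalence (via Livsic's theorem and uniqueness of the SRB measure) translates zero entropy production into a coboundary condition: $e(s) = 0$ iff $V(q)$ integrates to zero along every periodic orbit of $\varphi^{sq}_t$.

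For the forward direction, assume $e(s) = 0$ for all $s$ in a neighbourhood of $0$. Structural stability of Anosov flows provides, for each periodic orbit $\gamma$ of $\varphi_t$, a smooth family $\gamma_s$ of periodic orbits of $\varphi^{sq}_t$ with $\gamma_0 = \gamma$ and periods depending smoothly on $s$. Passing to the limit $s \to 0$ in $0 = \int_{\gamma_s} V(q)\,dt$ yields $\int_\gamma V(q)\,dt = 0$, so $I(V(q)) = 0$ for the unperturbed flow. Since $V(q) = -2iq_{-2}+2iq_2$ has Fourier components only of degree $\pm 2$, it is a sum of symmetric tensors of degree at most $2$ (with the degree $0$ and $1$ pieces vanishing), and Theorem~\ref{anosovi2} supplies $u \in C^\infty(SM)$ of degree $1$ with $(X+\lambda V)u = V(q)$.

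For the backward direction, one seeks a smooth $\phi_s \in C^\infty(SM)$ depending smoothly on $s$ with $\mathbf{G}_s\phi_s = -sV(q)$; then $e^{\phi_s}\,d\Sigma^3$ is a smooth $\mathbf{G}_s$-invariant volume and therefore $e(s) = 0$. Substituting $\phi_s = -su + s^2 a_1 + s^3 a_2 + \cdots$ and matching powers of $s$ yields the recursive cohomological system
\[
(X+\lambda V)a_1 = qV(u), \qquad (X+\lambda V)a_{k+1} = -qV(a_k) \;\; (k\geq 1).
\]
The hypothesis that $u$ has degree $1$ enters decisively: the degree-$\pm 2$ content of $V(q)=(X+\lambda V)u$ reads $\eta_\pm u_{\pm 1} = \pm 2iq_{\pm 2}$, from which one derives the algebraic identity $(qV(u))_{\pm 3} = \tfrac{1}{4}\eta_\pm(u_{\pm 1}^2)$. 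Combined with the injectivity of $\eta_\pm$ on $\Omega_{\pm 2}$ (Lemma~\ref{injectivity}), this identifies the top Fourier components of $a_1$; the remaining middle-degree pieces are then solved for by invoking the solenoidal-type content furnished by Theorem~\ref{theorem55}.

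The principal obstacle is this backward direction: closing the iterative construction and producing a $\phi_s$ that is smooth on $SM$ and smooth in $s$. The forward direction, in contrast, reduces by structural stability to a direct appeal to Theorem~\ref{anosovi2}.
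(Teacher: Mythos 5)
Your proposal misses the mechanism the paper actually uses, and the part you flag as an ``obstacle'' is a genuine gap rather than a technicality. The paper's proof is a linear-response computation: from \(\operatorname{div}\mathbf{G}_{s}=sV(q)\) one gets \(e(s)=-s\int_{SM}V(q)\,d\rho_{s}\), so \(e'(0)=0\) because \(\rho_{0}\) is the Liouville measure; then Ruelle's formula for the derivative of SRB states \cite{ruelle3} identifies \(e''(0)\) with \(\operatorname{Var}_{\rho_{0}}(V(q))\), and by \cite{pollicott94} this variance vanishes if and only if \(V(q)\) is a coboundary of the \emph{unperturbed} generator \(X+\lambda V\); Theorem~\ref{anosovi2} then upgrades the primitive to degree \(1\). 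In particular ``zero entropy production'' is meant infinitesimally (vanishing of \(e\) to second order at \(s=0\), with \(e\geq 0\) and \(e'(0)=0\) automatic), and with the variance identity both implications are immediate and symmetric. Your reading -- \(e(s)=0\) for all small \(s\) -- is a different statement: for fixed \(s\neq 0\), zero entropy production of \(\varphi^{sq}_{t}\) is equivalent (via Livsic and smoothness of an absolutely continuous SRB measure) to \(V(q)\) being a coboundary of the perturbed generator \(X+(\lambda+sq)V\), not of \(X+\lambda V\), and the hypothesis that \(V(q)=(X+\lambda V)u\) gives no control over that condition. This is exactly why your backward direction stalls.

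Concretely, the backward direction as you set it up cannot be closed by the proposed iteration: solving \((X+\lambda V)a_{1}=qV(u)\) in \(C^{\infty}(SM)\) requires the Livsic obstruction \(\int_{\gamma}qV(u)\,dt=0\) over every closed orbit of the magnetic flow, which is not available; Theorem~\ref{theorem55} produces flow-invariant distributions, not solutions of inhomogeneous transport equations, so it cannot supply the ``middle-degree pieces''; and even granting all the \(a_{k}\), nothing is offered to make the power series in \(s\) converge to a smooth \(\phi_{s}\). Your forward direction is a plausible alternative route (structural stability plus the limit of periodic-orbit integrals, then Theorem~\ref{anosovi2}), but it silently relies on the nontrivial equivalence between zero entropy production and the coboundary property of \(\operatorname{div}\mathbf{G}_{s}\) for the perturbed flow, and it only covers one implication. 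The missing idea throughout is the identification \(e''(0)=\operatorname{Var}_{\rho_{0}}(V(q))\) together with the variance--coboundary equivalence, which is what makes the theorem a two-line consequence of Theorem~\ref{anosovi2}.
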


Throughout this paper we are restricting ourselves to the case where \(M\) is closed, however, as has been mentioned, many analogous questions have been asked when \(M\) is compact with non-empty boundary, see for example \cite{ainsworth12, dairbekov107, paternain111, paternain211, pestov05}.

\section{\(\alpha\)-controlled Estimates and Surjectivity of \(I^{*}\)}
\label{sec:surj}

\begin{definition}
Let \(\alpha\in[0,1]\). We say that \((M,g,\Omega)\) is \(\alpha\)-controlled if 
\[\left\|(X+\lambda V)u\right\|^{2}-(\mathbb{K}u,u)\geq \alpha \left\|(X+\lambda V)u\right\|^{2}\]
for all \(u \in C^{\infty}(SM)\).
\end{definition}
In the proofs that follow we will denote by \(T:C^{\infty}(SM)\rightarrow \bigoplus_{\left|k\right|\geq m+1}\Omega_{k}\) the projection operator, defined by: 
\[Tu=\sum_{\left|k\right|\geq m+1}u_{k}.\]
In addition we define \(Q:C^{\infty}(SM)\rightarrow \bigoplus_{\left|k\right|\geq m+1}\Omega_{k}\) as follows \(Qu:=TV((X+\lambda V)u)\).

The following proposition proved crucial in achieving the surjectivity of \(I^{*}\) in the geodesic setting. We include it here to exhibit the difficulties associated with adapting such techniques to the magnetic setting due to the presence of the extra terms resulting from the coupled equations.

\begin{proposition} Let \((M,g,\Omega)\) be a closed, oriented Riemannian surface equipped with a magnetic flow. Suppose \((M,g,\Omega)\) is \(\alpha\)-controlled and let \(m\) be an integer \(\geq 2\). Then given any \(u\in\bigoplus_{\left|k\right|\geq m}\Omega_{k}\) we have 
\begin{align}
\left\|Qu\right\|^{2} \geq& \left\|v\right\|^{2} + \alpha \left\|w\right\|^{2} +\left(1-(m-1)^{2}+\alpha m^{2}\right)\left(\left\|\eta_{-}u_{m}\right\|^{2}+\left\|\eta_{+}u_{-m}\right\|^{2}\right) \nonumber\\
                      & + \left(1-m^{2}\right)\left(\left\|im\lambda u_{m}+\eta_{-}u_{m+1}\right\|^{2}+\left\|-im\lambda u_{-m}+\eta_{+}u_{-(m+1)}\right\|^{2}\right) \nonumber\\
                      & + \alpha \left(\left\|-m^{2}\lambda u_{m}+\eta_{-}i(m+1)u_{m+1}\right\|^{2}+\left\|-m^{2}\lambda u_{-m}-\eta_{+}i(m+1)u_{-(m+1)}\right\|^{2} \right) \nonumber
\end{align}
where \(v:=\sum_{\left|k\right|\geq m+1}((X+\lambda V)u)_{k}\) and \(w:=\sum_{\left|k\right|\geq m+1}((X+\lambda V)Vu)_{k}\). 
\label{inequality}
\end{proposition}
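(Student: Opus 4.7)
The plan is to combine Pestov's identity with the \(\alpha\)-control assumption applied to \(Vu\) (rather than \(u\) itself) and then extract the low Fourier modes removed by the projection \(T\) via explicit mode-by-mode computation. I will first compute the action of \(X+\lambda V\) on Fourier modes: since \(X=\eta_{+}+\eta_{-}\) with \(\eta_{\pm}:\Omega_{k}\to\Omega_{k\pm 1}\), since \(V\) acts as multiplication by \(ik\) on \(\Omega_{k}\), and since \(\lambda\in C^{\infty}(M)\) has Fourier degree zero, I obtain
\[
((X+\lambda V)u)_{j}=\eta_{+}u_{j-1}+\eta_{-}u_{j+1}+ij\lambda u_{j},
\]
whence \((V((X+\lambda V)u))_{j}=ij((X+\lambda V)u)_{j}\) and
\[
((X+\lambda V)Vu)_{j}=i(j-1)\eta_{+}u_{j-1}+i(j+1)\eta_{-}u_{j+1}-j^{2}\lambda u_{j}.
\]
Because \(u\in\bigoplus_{|k|\geq m}\Omega_{k}\), the only nonzero components of these three functions at degrees \(|j|\leq m\) occur at \(|j|\in\{m-1,m\}\), and in each case they involve exactly the four scalars \(\|\eta_{\pm}u_{\mp m}\|^{2}\) and the two mixed brackets appearing in the statement.

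I will then combine Pestov's identity
\[
\|V((X+\lambda V)u)\|^{2}=\|(X+\lambda V)Vu\|^{2}-(\mathbb{K}Vu,Vu)+\|(X+\lambda V)u\|^{2}
\]
with the \(\alpha\)-control inequality \(\|(X+\lambda V)Vu\|^{2}-(\mathbb{K}Vu,Vu)\geq\alpha\|(X+\lambda V)Vu\|^{2}\) (applied to \(Vu\)) and the orthogonality identity \(\|V((X+\lambda V)u)\|^{2}=\|Qu\|^{2}+\sum_{|k|\leq m}k^{2}\|((X+\lambda V)u)_{k}\|^{2}\) to get
\[
\|Qu\|^{2}\geq\|v\|^{2}+\alpha\|w\|^{2}+\alpha\sum_{|k|\leq m}\|((X+\lambda V)Vu)_{k}\|^{2}+\sum_{|k|\leq m}(1-k^{2})\|((X+\lambda V)u)_{k}\|^{2}
\]
after splitting \(\|(X+\lambda V)u\|^{2}\) and \(\|(X+\lambda V)Vu\|^{2}\) into their high-mode (\(\|v\|^{2}\) and \(\|w\|^{2}\)) and low-mode parts. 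Plugging in the explicit formulas from the first step and collecting by the eight distinct low-mode quantities yields the asserted inequality: for instance, \(\|\eta_{-}u_{m}\|^{2}\) picks up \(+1\) from the low-mode part of \(\|(X+\lambda V)u\|^{2}\) at degree \(m-1\), \(-(m-1)^{2}\) from the low-mode subtraction at degree \(m-1\), and \(+\alpha m^{2}\) from \(\alpha\|((X+\lambda V)Vu)_{m-1}\|^{2}=\alpha m^{2}\|\eta_{-}u_{m}\|^{2}\), totalling the coefficient \(1-(m-1)^{2}+\alpha m^{2}\); the other coefficients emerge analogously.

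The proof is essentially bookkeeping, so the main obstacle is organisational rather than analytic: one must correctly identify which low modes of the three norms survive when \(u\) is truncated below degree \(m\) and keep careful track of the factors of \(ik\) introduced at each stage. The one genuinely conceptual choice is to invoke \(\alpha\)-control on \(Vu\) rather than on \(u\); applying it to \(u\) would leave \(\|(X+\lambda V)u\|^{2}\) rather than \(\|(X+\lambda V)Vu\|^{2}\) on the right, and the resulting low-mode contributions would fail to assemble into the coefficients displayed in the proposition.
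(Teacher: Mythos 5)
Your proposal is correct and follows essentially the same route as the paper: expand \(\|V((X+\lambda V)u)\|^{2}\), \(\|(X+\lambda V)u\|^{2}\) and \(\|(X+\lambda V)Vu\|^{2}\) into Fourier modes (only \(|j|\in\{m-1,m\}\) surviving below the truncation since \(u\in\bigoplus_{|k|\geq m}\Omega_{k}\) and \(m\geq 2\)), then apply Pestov's identity together with the \(\alpha\)-control hypothesis evaluated at \(Vu\), and collect coefficients. The mode formulas and the resulting coefficients \(1-(m-1)^{2}+\alpha m^{2}\), \(1-m^{2}\), \(\alpha\) all check out against the paper's computation.
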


\begin{proof} Given \(u\in\bigoplus_{\left|k\right|\geq m}\Omega_{k}\) we compute:
\begin{align*}
\sum_{\left|k\right|\leq m}k^{2}\left\|((X+\lambda V)u)_{k}\right\|^{2}  = &(m-1)^{2}\left\|\eta_{-}u_{m}\right\|^{2}+m^{2}\left\|im\lambda u_{m}+\eta_{-}u_{m+1}\right\|^{2}+(m-1)^{2}\left\|\eta_{+}u_{-m}\right\|^{2}\\
                                                                        & +m^{2}\left\|-im\lambda u_{-m}+\eta_{+}u_{-(m+1)}\right\|^{2}.\\                                                           \\ \left\|(X+\lambda V)u\right\|^{2}  = &\left\|im\lambda u_{m}+\eta_{-}u_{m+1}\right\|^{2}+\left\|\eta_{-}u_{m}\right\|^{2}+\left\|i(-m)\lambda u_{-m}+\eta_{+}u_{-(m+1)}\right\|^{2}+\left\|\eta_{+}u_{-m}\right\|^{2}\\
                                                                        & +\left\|\sum_{\left|k\right|\geq m+1}((X+\lambda V)u)_{k}\right\|^{2}.\\
                                  \\ \left\|(X+\lambda V)Vu\right\|^{2}  = &\left\|-m^{2}\lambda u_{m}+\eta_{-}i(m+1)u_{m+1}\right\|^{2}+\left\|\eta_{-}mu_{m}\right\|^{2}+\left\|-m^{2}\lambda u_{-m}-\eta_{+}i(m+1)u_{-(m+1)}\right\|^{2}\\
                                                                        & +\left\|\eta_{+}mu_{-m}\right\|^{2}+\left\|\sum_{\left|k\right|\geq m+1}((X+\lambda V)Vu)_{k}\right\|^{2}.
\end{align*}

\begin{align}
                                     \left\|V((X+\lambda V)u)\right\|^{2}  = &\sum_{\left|k\right|\leq m}k^{2}\left\|((X+\lambda V)u)_{k}\right\|^{2}+\left\|TV((X+\lambda V)u)\right\|^{2}\nonumber \\
                                                                         = &(m-1)^{2}\left\|\eta_{-}u_{m}\right\|^{2}+m^{2}\left\|im\lambda u_{m}+\eta_{-}u_{m+1}\right\|^{2} + (m-1)^{2}\left\|\eta_{+}u_{-m}\right\|^{2}\nonumber\\
                                                                        & +m^{2}\left\|-im\lambda u_{-m}+\eta_{+}u_{-(m+1)}\right\|^{2}+\left\|TV((X+\lambda V)u)\right\|^{2}.\label{yes}
\end{align}

Now we use Pestov's Identity and our hypothesis:
\begin{align*}
&\left\|V((X+\lambda V)u)\right\|^{2} \\
   &\ \ \ \ = \left\|(X+\lambda V)Vu\right\|^{2}-(\mathbb{K}Vu,Vu)+\left\|(X+\lambda V)u\right\|^{2}\\
   &\ \ \ \ \geq \alpha \left\|(X+\lambda V)Vu\right\|^{2} + \left\|(X+\lambda V)u\right\|^{2}\\
   &\ \ \ \ =\alpha \left\|-m^{2}\lambda u_{m}+\eta_{-}i(m+1)u_{m+1}\right\|^{2} + \alpha \left\|\eta_{-}mu_{m}\right\|^{2}\\
   &\ \ \ \ \ \ \ \ +\alpha \left\|-m^{2}\lambda u_{-m}-\eta_{+}i(m+1)u_{-(m+1)}\right\|^{2} + \alpha \left\|\eta_{+}mu_{-m}\right\|^{2} +\alpha \left\|w\right\|^{2}\\
   &\ \ \ \ \ \ \ \ +\left\|im\lambda u_{m}+\eta_{-}u_{m+1}\right\|^{2}+\left\|\eta_{-}u_{m}\right\|^{2}+\left\|i(-m)\lambda u_{-m}+\eta_{+}u_{-(m+1)}\right\|^{2}+\left\|\eta_{+}u_{-m}\right\|^{2}+\left\|v\right\|^{2}.
\end{align*}
To conclude we simply combine this inequality with equation (\ref{yes}).

\end{proof}

\begin{theorem} Let \((M,g,\Omega)\) be a closed, oriented Riemannian surface equipped with an Anosov magnetic flow. There exists an \(\alpha>0\) such that the following inequality holds for all \(\psi\in C^{\infty}(SM)\):
\[\left\|(X+\lambda V)\psi\right\|^{2}-(\mathbb{K}\psi,\psi)\geq \alpha\left(\left\|(X+\lambda V)\psi\right\|^{2}+\left\|\psi\right\|^{2}\right)\]
In particular, \((M,g,\Omega)\) is \(\alpha\)-controlled.
\label{theorem32}
\end{theorem}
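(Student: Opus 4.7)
The plan is to exploit the Anosov structure via a magnetic Riccati equation. Since perpendicular magnetic Jacobi fields \(y\) along an orbit satisfy \(\ddot{y}+\mathbb{K}y=0\) (see \cite{wojtkowski08}), the stable and unstable subbundles \(E^{s},E^{u}\subset T(SM)\) yield two functions \(r_{s},r_{u}\in L^{\infty}(SM)\), obtained as the logarithmic derivatives \(r=\dot{y}/y\) of perpendicular stable/unstable Jacobi fields, which solve the magnetic Riccati equation
\[(X+\lambda V)r+r^{2}+\mathbb{K}=0\]
in the distributional sense along orbits. The Anosov hypothesis forces \(r_{u}-r_{s}\geq c_{0}>0\) uniformly on \(SM\), a quantitative form of the transversality \(E^{u}\cap E^{s}=\{0\}\).

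For any smooth \(\psi\) and any such \(r\), integration by parts (using that \(X+\lambda V\) is divergence-free with respect to \(d\Sigma^{3}\)) combined with the Riccati equation yields the pointwise identity
\[\|(X+\lambda V)\psi - r\psi\|^{2}=\|(X+\lambda V)\psi\|^{2}-(\mathbb{K}\psi,\psi).\]
Indeed, \(\int r\psi(X+\lambda V)\psi\,d\Sigma^{3}=-\tfrac{1}{2}\int \psi^{2}(X+\lambda V)r\,d\Sigma^{3}=\tfrac{1}{2}\int \psi^{2}(r^{2}+\mathbb{K})\,d\Sigma^{3}\), which cancels the cross term when one expands the left-hand side. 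When \(r\) is only in \(L^{\infty}\), the identity is obtained by mollifying \(r\) along the flow and passing to the limit. Applying it with \(r=r_{u}\) and with \(r=r_{s}\), adding, and invoking the parallelogram-type inequality \(\|a-b\|^{2}+\|a-c\|^{2}\geq\tfrac{1}{2}\|b-c\|^{2}\) gives
\[\|(X+\lambda V)\psi\|^{2}-(\mathbb{K}\psi,\psi)\geq\tfrac{1}{4}\|(r_{u}-r_{s})\psi\|^{2}\geq\tfrac{c_{0}^{2}}{4}\|\psi\|^{2}.\]

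Setting \(D:=\|(X+\lambda V)\psi\|^{2}-(\mathbb{K}\psi,\psi)\), the identity with \(r=r_{u}\) also gives \(\|(X+\lambda V)\psi\|^{2}\leq 2D + 2\|r_{u}\|_{\infty}^{2}\|\psi\|^{2}\); combined with \(\|\psi\|^{2}\leq \tfrac{4}{c_{0}^{2}}D\) this yields \(\|(X+\lambda V)\psi\|^{2}+\|\psi\|^{2}\leq \alpha^{-1}D\) for a suitable \(\alpha>0\) depending only on \(c_{0}\) and \(\|r_{u}\|_{\infty}\), which is the stated inequality. The main obstacle is the analytic construction of \(r_{u},r_{s}\in L^{\infty}(SM)\) solving the Riccati equation in a sense strong enough to justify the above integration by parts; this is the magnetic analogue of a classical construction of Green in the Riemannian setting and follows from the exponential contraction/expansion estimates in the definition of an Anosov flow, applied to the linearised flow on the orthogonal complement of the flow direction.
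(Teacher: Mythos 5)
Your proposal is correct and follows essentially the same route as the paper: both rest on the stable/unstable Riccati solutions of \((X+\lambda V)r+r^{2}+\mathbb{K}=0\) coming from the Anosov splitting, the integrated identity \(\left\|(X+\lambda V)\psi-r\psi\right\|^{2}=\left\|(X+\lambda V)\psi\right\|^{2}-(\mathbb{K}\psi,\psi)\), and the uniform separation \(r^{+}-r^{-}\geq c_{0}>0\) plus boundedness to absorb both \(\left\|\psi\right\|^{2}\) and \(\left\|(X+\lambda V)\psi\right\|^{2}\). The construction you flag as the main obstacle is simply quoted in the paper from \cite{dairbekov207}, which supplies continuous solutions \(r^{\pm}\) differentiable along the flow (so no mollification is needed), and your parallelogram-inequality finish is a harmless variant of the paper's step of solving for \(\psi\) and \((X+\lambda V)\psi\) in terms of \(A\) and \(B\).
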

\begin{proof} In \cite{dairbekov207} it is shown that there exist two continuous real-valued functions \(r^{\pm}\) on \(SM\) which are differentiable along the magnetic flow, and which satisfy the following Riccati type equation:
\[(X+\lambda V)r+r^{2}+\mathbb{K}=0.\]
Moreover, it is shown that \(r^{+}-r^{-}>0\). 
Consider,
\begin{align*}
\left|(X+\lambda V)\psi-r\psi\right|^{2} =& \left|(X+\lambda V)\psi\right|^{2}-2\operatorname{Re}\left\{r((X+\lambda V)\psi)\overline{\psi}\right\}+r^{2}\left|\psi\right|^{2}\\
                                         =& \left|(X+\lambda V)\psi\right|^{2}+\left|\psi\right|^{2}((X+\lambda V)r+r^{2})-(X+\lambda V)(r\left|\psi\right|^{2}).
\end{align*} 
One now integrates this over \(SM\), and uses both the Riccati equation and the fact that the volume form is invariant under the magnetic flow to obtain:
\[\left\|(X+\lambda V)\psi-r\psi\right\|^{2}=\left\|(X+\lambda V)\psi\right\|^{2}-(\mathbb{K}\psi,\psi).\]
Defining \(A:=(X+\lambda V)\psi-r^{-}\psi\) and \(B:=(X+\lambda V)\psi-r^{+}\psi\) we note that the previous equation guarantees that \(\left\|A\right\|=\left\|B\right\|\), and we can solve for \(\psi\) and \((X+\lambda V)\psi\) to get:
\begin{align*}
\psi =& (r^{+}-r^{-})^{-1}(A-B)\\
(X+\lambda V)\psi =& cA+(1-c)B,
\end{align*}
where \(c:=r^{+}/(r^{+}-r^{-})\). Therefore we can choose an \(\alpha>0\) such that:
\begin{align*}
2\alpha\left\|\psi\right\|^{2}\leq &\left\|A\right\|^{2}\\
2\alpha\left\|(X+\lambda V)\psi\right\|^{2}\leq &\left\|A\right\|^{2}.
\end{align*}
Thus yielding the desired inequality.
\end{proof}

We introduce the following operator \(P:C^{\infty}(SM)\rightarrow C^{\infty}(SM),\ \ Pu:=V((X+\lambda V)u)\). In addition, given \(E\) a subspace of \(\mathcal{D}'(SM)\) we define \(E_{\diamond}:=\left\{\mu\in E: \left\langle \mu,1\right\rangle=0.\right\}\)

\begin{lemma} Let \((M,g,\Omega)\) be a closed, oriented Riemannian surface equipped with an Anosov magnetic flow. Then there exists a constant \(C>0\) such that
\[\left\|u\right\|_{H^{1}(SM)}\leq C\left\|Pu\right\|_{L^{2}(SM)},\ \ \forall u\in C^{\infty}_{\diamond}(SM).\]
\label{lem:42}
\end{lemma}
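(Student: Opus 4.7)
The plan is to assemble the estimate from four ingredients: Pestov's identity, the $\alpha$-controlled estimate of Theorem~\ref{theorem32} applied to $Vu$, a short commutator computation, and the Poincar\'e inequality on the compact connected Riemannian manifold $SM$.

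First, I would apply Pestov's identity to $u$ and then invoke Theorem~\ref{theorem32} with $\psi = Vu$:
\begin{align*}
\|Pu\|^2 &= \|(X+\lambda V)Vu\|^2 - (\mathbb{K}Vu, Vu) + \|(X+\lambda V)u\|^2 \\
         &\geq \alpha\bigl(\|(X+\lambda V)Vu\|^2 + \|Vu\|^2\bigr) + \|(X+\lambda V)u\|^2.
\end{align*}
This single inequality already controls three of the four quantities making up the $H^1$ norm:
\[
\|(X+\lambda V)u\|^2 \leq \|Pu\|^2, \qquad \|Vu\|^2 \leq \alpha^{-1}\|Pu\|^2, \qquad \|(X+\lambda V)Vu\|^2 \leq \alpha^{-1}\|Pu\|^2.
\]

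Next, since $\lambda$ is pulled back from $M$ we have $V\lambda = 0$, and hence $[V, X+\lambda V] = [V,X] = -X_{\bot}$. Rearranging this identity yields $X_{\bot}u = (X+\lambda V)Vu - Pu$, and combining with the bound just obtained gives $\|X_{\bot}u\| \leq C\|Pu\|$. At this stage each of the three derivatives contributing to $\|u\|_{H^1(SM)}$ is bounded by $\|Pu\|$. It remains to control $\|u\|_{L^2(SM)}$ itself, and this is where the hypothesis $u\in C^{\infty}_{\diamond}(SM)$ is essential: since $SM$ is compact and connected, the Poincar\'e inequality furnishes a constant $C_P$ with $\|u\|_{L^2}\leq C_P\|\nabla u\|_{L^2}$ for any smooth $u$ with zero mean (note that $\|\nabla u\|^2 = \|(X+\lambda V)u\|^2 + \|X_{\bot}u\|^2 + \|Vu\|^2$). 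Assembling these pieces yields $\|u\|_{H^1(SM)}\leq C\|Pu\|_{L^2}$. The zero-mean assumption cannot be dropped, since $P$ annihilates constants.

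I expect the main subtlety to lie in the extraction of $\|X_{\bot}u\|$ from $\|Pu\|$: although the commutator identity is short, it relies on the fortunate fact that $V\lambda = 0$, which is exactly what makes the generator $X+\lambda V$ interact cleanly with the Fourier decomposition in $V$. All other ingredients are either already established in the paper (Pestov's identity and Theorem~\ref{theorem32}) or are classical (the Poincar\'e inequality on a compact connected Riemannian manifold).
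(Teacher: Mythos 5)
Your proof is correct and follows essentially the same route as the paper: Pestov's identity combined with Theorem~\ref{theorem32} applied to $Vu$, the commutator identity $X_{\bot}u=[X+\lambda V,V]u$ to recover $\|X_{\bot}u\|$, and the Poincar\'e inequality on $SM$ for zero-mean functions. The only difference is cosmetic — you bound $\|(X+\lambda V)Vu\|$ by $\alpha^{-1/2}\|Pu\|$ first and then use the triangle inequality, whereas the paper substitutes the commutator estimate back into the Pestov inequality and solves, which yields the same conclusion.
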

\begin{proof} To begin we combine the Pestov Identity with Theorem~\ref{theorem32} to obtain for any \(u\in C^{\infty}(SM)\):
\begin{align*}
\left\|V((X+\lambda V)u)\right\|^{2} =& \left\|(X+\lambda V)Vu\right\|^{2}-(\mathbb{K}Vu,Vu)+\left\|(X+\lambda V)u\right\|^{2}\\
                                     \geq& \left\|(X+\lambda V)u\right\|^{2}+\alpha (\left\|(X+\lambda V)Vu\right\|^{2}+\left\|Vu\right\|^{2}).
\end{align*}
Recall \(X_{\bot}u=[X,V]u=[X+\lambda V,V]u\). Therefore,
\[\left\|X_{\bot}u\right\|^{2}\leq 2(\left\|(X+\lambda V)Vu\right\|^{2}+\left\|V((X+\lambda V)u)\right\|^{2}),\]
and so
\[\left\|(X+\lambda V)Vu\right\|^{2}\geq \frac{1}{2}\left\|X_{\bot}u\right\|^{2}-\left\|V((X+\lambda V)u)\right\|^{2}.\]
Using this we write:
\[\left\|V((X+\lambda V)u)\right\|^{2} \geq \left\|(X+\lambda V)u\right\|^{2} +\alpha\left\|Vu\right\|^{2} + \frac{1}{2}\alpha\left\|X_{\bot}u\right\|^{2}-\alpha\left\|V((X+\lambda V)u)\right\|^{2}.\]
Therefore,
\[\left\|V((X+\lambda V)u)\right\|^{2} \geq \frac{1}{1+\alpha}\left(\left\|(X+\lambda V)u\right\|^{2} +\alpha\left\|Vu\right\|^{2} + \frac{1}{2}\alpha\left\|X_{\bot}u\right\|^{2}\right).\]
By the Poincare Inequality for closed Riemannian manifolds, there exists \(D>0\) such that 
\[\left\|u\right\|^{2}\leq D\left\|\nabla u\right\|=D(\left\|(X+\lambda V)u\right\|^{2}+\left\|X_{\bot}u\right\|^{2}+\left\|Vu\right\|^{2}),\ \ \forall u\in C^{\infty}_{\diamond}(SM).\]
Therefore, there exists \(C>0\) such that
\[\left\|u\right\|_{H^{1}(SM)}\leq C\left\|Pu\right\|,\ \ \forall u\in C^{\infty}_{\diamond}(SM).\]
\end{proof}

\begin{lemma} Let \((M,g,\Omega)\) be a closed, oriented Riemannian surface equipped with an Anosov magnetic flow. For any \(f\in H^{-1}_{\diamond}(SM)\), there exists \(h\in L^{2}(SM)\) satisfying
\[P^{*}h=f\ \ \mbox{in}\ SM.\]
In addition, \(\left\|h\right\|_{L^{2}(SM)}\leq C\left\|f\right\|_{H^{-1}(SM)}\) with \(C>0\) independent of \(f\).
\label{lemma43}
\end{lemma}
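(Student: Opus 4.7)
The plan is to deduce this by a standard duality argument from Lemma \ref{lem:42}. First observe that Lemma \ref{lem:42} forces $P$ to be injective on $C^{\infty}_{\diamond}(SM)$: if $Pu=0$ then $\|u\|_{H^{1}(SM)}=0$. Consequently the map $Pu\mapsto u$ is well defined on the subspace $P(C^{\infty}_{\diamond}(SM))\subset L^{2}(SM)$ and bounded as a map into $H^{1}(SM)$, with bound $C$.

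Next I would introduce the linear functional
\[
\ell: P(C^{\infty}_{\diamond}(SM))\to\mathbb{R},\qquad \ell(Pu):=\langle f,u\rangle,
\]
where the right-hand side denotes the $H^{-1}$--$H^{1}$ duality pairing (which makes sense since $u\in C^{\infty}\subset H^{1}$). Well-definedness follows from the injectivity just noted, and
\[
|\ell(Pu)|\leq \|f\|_{H^{-1}(SM)}\|u\|_{H^{1}(SM)}\leq C\|f\|_{H^{-1}(SM)}\|Pu\|_{L^{2}(SM)},
\]
so $\ell$ is bounded in the $L^{2}$ topology with operator norm at most $C\|f\|_{H^{-1}(SM)}$. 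I would then extend $\ell$ to all of $L^{2}(SM)$ by the Hahn--Banach theorem preserving its norm, and apply the Riesz representation theorem to produce $h\in L^{2}(SM)$ with $\ell(\phi)=\langle h,\phi\rangle_{L^{2}(SM)}$ for every $\phi\in L^{2}(SM)$ and with $\|h\|_{L^{2}(SM)}\leq C\|f\|_{H^{-1}(SM)}$.

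Finally I need to check that $P^{*}h=f$ distributionally, i.e., that $\langle h,P\varphi\rangle_{L^{2}(SM)}=\langle f,\varphi\rangle$ for every test function $\varphi\in C^{\infty}(SM)$. Write $\varphi=c+\varphi_{\diamond}$ with $c$ a constant and $\varphi_{\diamond}\in C^{\infty}_{\diamond}(SM)$. Since $(X+\lambda V)c=0$ we have $P\varphi=P\varphi_{\diamond}$, hence
\[
\langle h,P\varphi\rangle_{L^{2}(SM)}=\ell(P\varphi_{\diamond})=\langle f,\varphi_{\diamond}\rangle=\langle f,\varphi\rangle,
\]
the last equality using precisely the hypothesis $f\in H^{-1}_{\diamond}(SM)$, i.e., $\langle f,1\rangle=0$.

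The argument is essentially formal once Lemma \ref{lem:42} is established; there is no real obstacle. The only delicate point is the treatment of constants, which is exactly why one must restrict to $H^{-1}_{\diamond}(SM)$: distributions pairing non-trivially with the constants cannot possibly lie in the range of $P^{*}$, since $P$ annihilates $\mathbb{R}\subset C^{\infty}(SM)$.
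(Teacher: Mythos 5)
Your proof is correct and takes essentially the same approach the paper intends: the paper omits the proof of Lemma~\ref{lemma43} precisely because it is the same Hahn--Banach/Riesz duality argument as in Lemma~\ref{lemma54}, with the estimate of Lemma~\ref{lem:42} in place of Proposition~\ref{prop:new} and the decomposition $\varphi=c+\varphi_{\diamond}$ (constants plus zero-mean part) playing the role that $u=u_{0}+(u-u_{0})$ plays there.
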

\begin{proof} This proof is adapted directly from \cite{paternain12} and is very similar to the proof of Lemma~\ref{lemma54} later in this article, hence we omit it.
\end{proof}

\begin{proof}[Proof of Theorem~\ref{theorem14}] Given \(f\in C^{\infty}(M)\) Lemma~\ref{lemma43} ensures there exists \(h\in L^{2}(SM)\) such that 
\[P^{*}h=-(X+\lambda V)f.\]
Define \(w:= Vh+f\). Then,
\[(X+\lambda V)w=(X+\lambda V)Vh + (X+\lambda V)f=0.\]
Clearly, \(w_{0}=f\).
\end{proof}

The following proposition gives the crucial estimate for establishing the surjectivity of the adjoint of the ray transform.

\begin{proposition} Let \((M,g,\Omega)\) be a closed, oriented Riemannian surface equipped with an Anosov magnetic flow. Then there exists a constant \(C>0\) such that
\[\left\|u\right\|_{H^{1}(SM)}\leq C\left\|Qu\right\|_{L^{2}(SM)}\]
for any \(u\in\bigoplus_{|k|\geq 1} \Omega_{k}\).
\label{prop:new}
\end{proposition}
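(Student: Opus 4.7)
The plan is to parallel Lemma~\ref{lem:42}, with the finer Fourier decomposition underlying Proposition~\ref{inequality} used in place of the Pestov identity employed there. Since $u \in \bigoplus_{|k|\geq 1}\Omega_k$ has $u_0 = 0$, it lies in $C^\infty_\diamond(SM)$, so Lemma~\ref{lem:42} gives at once
\[\|u\|_{H^1(SM)} \leq C_1\|Pu\|_{L^2(SM)}.\]
Fourier orthogonality yields $\|Pu\|^2 = \|Qu\|^2 + \sum_{|k|\leq m}\|(Pu)_k\|^2$, and since $(Pu)_0 = 0$, only the finitely many components $(Pu)_k = ik(\eta_-u_{k+1}+ik\lambda u_k+\eta_+u_{k-1})$ with $1\leq |k|\leq m$ remain to be estimated.

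I will take $m = 1$, so that $(Pu)_{\pm 1} = \mp\lambda u_{\pm 1}\pm i\eta_\mp u_{\pm 2}$. Reworking the proof of Proposition~\ref{inequality} in the present regime $u \in \bigoplus_{|k|\geq 1}\Omega_k$ (its stated form requires $m \geq 2$) should yield positive bounds on $\|\eta_\pm u_{\pm 1}\|^2$, on the twisted combinations $\|-\lambda u_{\pm 1}\pm 2i\eta_\mp u_{\pm 2}\|^2$, and on the high-frequency tail $\|v\|^2 = \|\sum_{|k|\geq 2}((X+\lambda V)u)_k\|^2$, each by a constant multiple of $\|Qu\|^2$. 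Comparing $(Pu)_{\pm 1}$ with the twisted combinations via the triangle inequality reduces matters to controlling $\|\eta_\mp u_{\pm 2}\|^2$; a crude bound gives $\|\eta_\mp u_{\pm 2}\|^2 \leq \tfrac12\|\nabla u\|^2 \leq \tfrac12\|u\|_{H^1}^2$. Combining produces
\[\|Pu\|^2 \leq C_2\|Qu\|^2 + C_3\|u\|_{H^1}^2,\]
and chaining with Lemma~\ref{lem:42} yields $\|u\|_{H^1}^2 \leq C_1^2 C_2\|Qu\|^2 + C_1^2 C_3 \|u\|_{H^1}^2$; provided $C_1^2 C_3 < 1$, the last term is absorbed on the left to give the claim.

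The principal obstacle is twofold. First, the proof of Proposition~\ref{inequality} exploits $u_{m-1} = u_{m-2} = 0$, whereas at $m = 1$ only $u_0 = 0$ holds; consequently the $k = 0$ Fourier mode of $(X+\lambda V)u$ contributes an extra cross term $\|\eta_-u_1+\eta_+u_{-1}\|^2$ (in place of the decoupled $\|\eta_-u_1\|^2+\|\eta_+u_{-1}\|^2$) that must be carefully tracked through the Pestov argument. Second, the constants must be kept small enough for the absorption to close; should the crude bound $\|\eta_\mp u_{\pm 2}\|^2 \leq \tfrac12\|u\|_{H^1}^2$ prove too wasteful, one should instead control $\|\eta_\mp u_{\pm 2}\|$ directly through $\|v\|^2 \leq \|Qu\|^2$ (which controls $((X+\lambda V)u)_{\pm 2} = \eta_\mp u_{\pm 3} \pm 2i\lambda u_{\pm 2} + \eta_\pm u_{\pm 1}$ and cascades similarly) together with Lemma~\ref{injectivity}, thereby breaking any circular dependence on $\|u\|_{H^1}$.
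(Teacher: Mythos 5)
Your high-level plan — reduce via Lemma~\ref{lem:42} to bounding $\|(Pu)_{\pm 1}\|$ by $C\|Qu\|$ — is exactly the paper's, and you have correctly identified the relevant objects (the ``twisted combinations'' $\mp\lambda u_{\pm 1}+2i\eta_{\mp}u_{\pm 2}$ are nothing but $((X+\lambda V)Vu)_{\pm 1}$). But the mechanism you propose for closing the estimate does not work, and the paper's is cleaner. The ingredient you are missing is the following consequence of Pestov's identity plus Theorem~\ref{theorem32} applied to $\psi = Vu$:
\[
\|Pu\|^{2}=\sum_{|k|\leq 1}k^{2}\|((X+\lambda V)u)_{k}\|^{2}+\|Qu\|^{2}\;\geq\;\|(X+\lambda V)u\|^{2}+\alpha\|(X+\lambda V)Vu\|^{2}+\alpha\|Vu\|^{2}.
\]
Since $k^{2}\leq 1$ for $|k|\leq 1$, the low-frequency sum on the left is dominated by the corresponding part of $\|(X+\lambda V)u\|^{2}$ on the right and cancels, leaving the \emph{clean} inequality $\|Qu\|^{2}\geq\alpha\|(X+\lambda V)Vu\|^{2}+\alpha\|Vu\|^{2}$ with no residual $\|Pu\|$ or $\|u\|_{H^{1}}$ terms. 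From there: $\|Qu\|^{2}\geq\alpha\|Vu\|^{2}$ gives $\|\lambda u_{\pm 1}\|\leq C\|Qu\|$; projecting $\|Qu\|^{2}\geq\alpha\|(X+\lambda V)Vu\|^{2}$ onto the $\pm 1$ modes gives $\|2i\eta_{\mp}u_{\pm 2}\mp\lambda u_{\pm 1}\|\leq C\|Qu\|$; the reverse triangle inequality with the first bound yields $\|\eta_{\mp}u_{\pm 2}\|\leq C\|Qu\|$; and then $(Pu)_{\pm 1}=\pm i(\eta_{\mp}u_{\pm 2}\pm i\lambda u_{\pm 1})$ is controlled by the triangle inequality. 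No absorption, no smallness condition.

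Neither of your two proposed routes closes the gap. The absorption route requires $C_{1}^{2}C_{3}<1$, where $C_{1}$ comes from Lemma~\ref{lem:42} and $C_{3}$ from the crude derivative bound $\|\eta_{\mp}u_{\pm 2}\|^{2}\leq\tfrac12\|u\|_{H^{1}}^{2}$; these constants are entirely unrelated and there is no reason their product should be small. The fallback via $\|v\|^{2}\leq\|Qu\|^{2}$ and Lemma~\ref{injectivity} also fails: $\|v\|^{2}$ controls $((X+\lambda V)u)_{\pm 2}=\eta_{\mp}u_{\pm 3}\pm 2i\lambda u_{\pm 2}+\eta_{\pm}u_{\pm 1}$, in which $\eta_{\mp}u_{\pm 2}$ simply does not appear, and the ``cascade'' only pushes the missing term outward rather than recovering it. Moreover Lemma~\ref{injectivity} is a pure injectivity statement with no quantitative norm estimate, and even a quantitative version would bound $u_{k}$ in terms of $\eta_{+}u_{k}$ (for $k\geq 1$), which is the wrong direction for controlling $\eta_{-}u_{2}$. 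The correct and simpler fix is the displayed inequality above, after which everything follows by triangle inequalities alone.
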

\begin{proof}
In this proof we will denote by \(C>0\) a constant which will be permitted to change from line to line for ease of notation. Let \(u\in\bigoplus_{|k|\geq 1} \Omega_{k}\). Now from the definition
\[\left\|Pu\right\|^{2} = \sum_{|k|\leq 1}k^{2}\left\|\left(\left(X+\lambda V\right)u\right)_{k}\right\|^{2}+\left\|Qu\right\|^{2}.\]
Moreover, from Lemma~\ref{lem:42} we know that 
\[\left\|u\right\|_{H^{1}(SM)}\leq C\left\|Pu\right\|_{L^{2}(SM)}.\]
Thus, it remains to show that \(\left\|\left((X+\lambda V)u\right)_{\pm 1}\right\|\leq C\left\|Qu\right\|\). Consider that by Theorem~\ref{theorem32} and the Pestov Identity we have
\begin{align}
\left\|Pu\right\|^{2} &= \sum_{|k|\leq 1}k^{2}\left\|\left(\left(X+\lambda V\right)u\right)_{k}\right\|^{2}+\left\|Qu\right\|^{2}\nonumber\\
                      &\geq \left\|(X+\lambda V)u\right\|^{2}+\alpha \left\|(X+\lambda V)Vu\right\|^{2}+\alpha\left\|Vu\right\|^{2}\label{eq:alpha}
\end{align}
Therefore, \(\left\|Qu\right\|^{2} \geq \alpha \left\|Vu\right\|^{2}\geq \alpha \left\|iu_{\pm 1}\right\|^{2}\).
And so, 
\begin{equation}
\left\|\lambda u_{\pm 1}\right\|\leq C\left\|Qu\right\|.\label{anotherestimate}
\end{equation}
Again considering equation~(\ref{eq:alpha}),
\begin{align*}
\left\|Qu\right\|^{2} &\geq \alpha \left\|(X+\lambda V)Vu\right\|^{2}\\
                      &\geq \alpha \left\|\left((X+\lambda V)Vu\right)_{1}\right\|^{2}+\alpha \left\|\left((X+\lambda V)Vu\right)_{-1}\right\|^{2}\\
                      &\geq \alpha\left\|2i\eta_{-}u_{2}-\lambda u_{1}\right\|^{2}+\alpha\left\|-2i\eta_{+}u_{-2}-\lambda u_{-1}\right\|^{2}
\end{align*}
Therefore, \(\left\|2i\eta_{-}u_{2}-\lambda u_{1}\right\|\leq C\left\|Qu\right\|\) and \(\left\|2i\eta_{+}u_{-2}+\lambda u_{-1}\right\|\leq C\left\|Qu\right\|.\)
Using the reverse triangle inequality we obtain 
\[\left\|2i\eta_{-}u_{2}\right\|-\left\|\lambda u_{1}\right\|\leq C\left\|Qu\right\|\ \ \mbox{and}\ \ \left\|2i\eta_{+}u_{-2}\right\|-\left\|\lambda u_{-1}\right\|\leq C\left\|Qu\right\|\]
Combining this with our previous estimate (\ref{anotherestimate}), gives 
\[\left\|\eta_{-}u_{2}\right\|\leq C\left\|Qu\right\|\ \ \mbox{and}\ \ \left\|\eta_{+}u_{-2}\right\|\leq\left\|Qu\right\|\]
Using these estimates with the triangle inequality again gives
\begin{align*}
\left\|\left((X+\lambda V)u\right)_{1}\right\| &=  \left\|\eta_{-}u_{2}+i\lambda u_{1}\right\|\\
                                               &\leq  \left\|\eta_{-}u_{2}\right\|+\left\|\lambda u_{1}\right\|\\
                                               &\leq  C\left\|Qu\right\|
\end{align*}
\begin{align*}
\left\|\left((X+\lambda V)u\right)_{-1}\right\| &=  \left\|\eta_{+}u_{-2}-i\lambda u_{-1}\right\|\\
                                                &\leq  \left\|\eta_{+}u_{-2}\right\|+\left\|\lambda u_{-1}\right\|\\
                                                &\leq  C\left\|Qu\right\|
\end{align*}
Thus concluding the proof.
\end{proof}

\begin{lemma} Let \((M,g,\Omega)\) be a closed, oriented Riemannian surface equipped with an Anosov magnetic flow. For any \(f\in H^{-1}(SM)\) with \(f_{0}=0\), there exists \(h\in L^{2}(SM)\) such that
\[Q^{*}h=f.\]
In addition, \(\left\|h\right\|_{L^{2}(SM)}\leq C\left\|f\right\|_{H^{-1}(SM)}\) with \(C>0\) independent of \(f\). 
\label{lemma54}
\end{lemma}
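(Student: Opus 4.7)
The plan is to run the standard Hahn--Banach/Riesz recipe, using the a priori estimate of Proposition~\ref{prop:new} in place of the one from Lemma~\ref{lem:42}; this parallels the argument for Lemma~\ref{lemma43} alluded to above.

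First I would set \(E := \{u \in C^{\infty}(SM) : u_{0} = 0\}\), which sits inside \(\bigoplus_{|k|\geq 1}\Omega_{k}\). By Proposition~\ref{prop:new}, \(Q\) is injective on \(E\) with
\[\|u\|_{H^{1}(SM)} \leq C\|Qu\|_{L^{2}(SM)}, \qquad u \in E,\]
so the prescription \(L(Qu) := \langle f, u\rangle\)---the right-hand side being the \(H^{-1}\)-\(H^{1}\) duality pairing---defines unambiguously a linear functional on the subspace \(Q(E) \subset L^{2}(SM)\) which satisfies \(|L(Qu)| \leq C\|f\|_{H^{-1}(SM)}\|Qu\|_{L^{2}(SM)}\). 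Extending \(L\) to all of \(L^{2}(SM)\) via Hahn--Banach without increasing the norm, and then invoking the Riesz representation theorem, I produce \(h \in L^{2}(SM)\) with \(\|h\|_{L^{2}} \leq C\|f\|_{H^{-1}}\) and \(\langle h, Qu\rangle = \langle f, u\rangle\) for every \(u \in E\).

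To upgrade this to the distributional identity \(Q^{*}h = f\), I would test against an arbitrary \(\varphi \in C^{\infty}(SM)\) by splitting \(\varphi = \varphi_{0} + \varphi'\), where \(\varphi_{0}\) is the (smooth) \(H_{0}\)-Fourier component and \(\varphi' \in E\). The decisive observation is that \(Q\varphi_{0} = 0\): since \(V\varphi_{0} = 0\), one has \((X+\lambda V)\varphi_{0} = X\varphi_{0} = \eta_{+}\varphi_{0} + \eta_{-}\varphi_{0} \in \Omega_{1} \oplus \Omega_{-1}\); applying \(V\) preserves this subspace; and the projection \(T\) onto \(\bigoplus_{|k|\geq 2}\Omega_{k}\) kills it. Thus \(\langle h, Q\varphi\rangle = \langle h, Q\varphi'\rangle = \langle f, \varphi'\rangle = \langle f, \varphi\rangle\), where the last equality uses \(f_{0} = 0\) to discard the contribution of \(\varphi_{0}\). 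Hence \(Q^{*}h = f\), and the norm bound on \(h\) has already been recorded.

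The genuinely substantial ingredient is Proposition~\ref{prop:new}, which has been established; the remainder is soft functional analysis. The only subtle point requiring care is that the hypothesis \(f_{0} = 0\) match up with the range of \(Q^{*}\), and this is arranged precisely by the Fourier truncation built into \(Q\) (which forces \(Q|_{\Omega_{0}} = 0\)).
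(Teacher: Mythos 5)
Your proposal is correct and follows essentially the same route as the paper's own proof (itself adapted from Lemma 5.4 of \cite{paternain12}): the Hahn--Banach/Riesz argument based on the estimate of Proposition~\ref{prop:new}, with the identity \(Q^{*}h=f\) verified by splitting off the zeroth Fourier component and using \(Q\varphi_{0}=0\) together with \(f_{0}=0\). You merely make explicit the observation \(Q\varphi_{0}=0\), which the paper leaves implicit in the step \(\left\langle Qu,h\right\rangle=\left\langle Q(u-u_{0}),h\right\rangle\).
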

\begin{proof} This proof is adapted directly from Lemma 5.4 given in \cite{paternain12} and is included here for completeness. We define a mapping from the subspace \(Q\left(\bigoplus_{|k|\geq 1}\Omega_{k}\right)\subset L^{2}(SM)\) as follows
\[l:Q\left(\bigoplus_{|k|\geq 1}\Omega_{k}\right)\rightarrow \mathbb{C},\ \ l(Qu):=\left\langle u,f\right\rangle.\]
This is well-defined since any element in \(Q\left(\bigoplus_{|k|\geq 1}\right)\Omega_{k}\) can be written as the image of a unique \(u\in \bigoplus_{|k|\geq 1}\Omega_{k}\) thanks to Proposition~\ref{prop:new}. In addition we have the following inequality
\[|l(Qu)|\leq\left\|f\right\|_{H^{-1}(SM)}\left\|u\right\|_{H^{1}(SM)}\leq C\left\|f\right\|_{H^{-1}(SM)}\left\|Qu\right\|_{L^{2}(SM)}.\]
Hence, \(l\) is continuous, and by the Hahn-Banach Theorem has a continuous extension
\[\overline{l}:L^{2}(SM)\rightarrow\mathbb{C}\ \ \mbox{satisfying}\ \ \left|\overline{l}(v)\right|\leq C\left\|f\right\|_{H^{-1}(SM)}\left\|v\right\|_{L^{2}(SM)}.\]
Now by the Riesz Representation Theorem there exists \(h\in L^{2}(SM)\) such that
\[\overline{l}(v) = \left\langle v,h\right\rangle_{L^{2}(SM)},\ \ \left\|h\right\|_{L^{2}(SM)}\leq C\left\|f\right\|_{H^{-1}(SM)}.\]
If \(u\in C^{\infty}(SM)\), then 
\begin{align*}
\left\langle u,Q^{*}h\right\rangle = &\left\langle Qu,h\right\rangle \\
                                   = &\left\langle Q(u-u_{0}),h\right\rangle \\
                                   = &~l(Q(u-u_{0})) \\
                                   = &\left\langle u-u_{0},f\right\rangle \\
                                   = &\left\langle u,f\right\rangle
\end{align*}
\end{proof}

\begin{proof}[Proof of Theorem~\ref{theorem55}] By Theorem~\ref{theorem32} there exists \(\alpha>0\) such that \((M,g,\Omega)\) is \(\alpha\)-controlled. Define \(f:=-(X+\lambda V)(a_{-1}+a_{1})\), and note that \(f_{0}=0\) by hypothesis. Hence, applying Lemma~\ref{lemma54} shows there exists \(h\in L^{2}(SM)\) such that 
\[Q^{*}h=(X+\lambda V)VTh=-(X+\lambda V)(a_{-1}+a_{1}).\]
It is clear that the distribution \(w:=VTh+a_{-1}+a_{1}\) satisfies the required properties.
\end{proof}

\section{Injectivity for Tensors of Degree at most \(2\)}
\label{sec:inj}

The following theorem will be used to achieve the injectivity of \(I_{2}\), and involves certain mixed norm spaces which we define as:
\[L^{2}_{x}H^{s}_{\theta}(SM):=\left\{u\in\mathcal{D}'(SM):\left\|u\right\|_{L^{2}_{x}H^{s}_{\theta}}<\infty\right\},\ \ \left\|u\right\|_{L^{2}_{x}H^{s}_{\theta}}:=\left(\sum^{\infty}_{k=-\infty}(1+k^{2})^{s}\left\|u_{k}\right\|^{2}_{L^{2}(SM)}\right)^{1/2}\]

\begin{theorem} Let \((M,g,\Omega)\) be a closed, oriented Riemannian surface equipped with an Anosov magnetic flow. If \(a_{1}\in\Omega_{1}\) is such that \(\eta_{-}a_{1}=0\), then there exists \(w=\sum^{\infty}_{k=1}w_{k}\in L^{2}_{x}H_{\theta}^{-1}(SM)\) such that \((X+\lambda V)w=0\), \(w_{1}=a_{1}\), and 
\[\left\|w\right\|_{L^{2}_{x}H_{\theta}^{-1}(SM)}\leq C\left\|a_{1}\right\|_{L^{2}(SM)}\]
for some \(C>0.\)\label{mixednormtheorem}
\end{theorem}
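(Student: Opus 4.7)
The plan is to prove this via a Hahn--Banach and Riesz representation argument in the spirit of Lemma~\ref{lemma54} and Theorem~\ref{theorem55}, but refined in two ways: to produce an invariant distribution concentrated on strictly positive Fourier modes, and to improve the regularity from $H^{-1}(SM)$ to the mixed-norm space $L^{2}_{x}H^{-1}_{\theta}(SM)$.

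First I would establish a mixed-norm refinement of Proposition~\ref{prop:new}. The target is an estimate
\[
\|u\|_{L^{2}(SM)}^{2}+\|Vu\|_{L^{2}(SM)}^{2}\leq C\|Qu\|_{L^{2}(SM)}^{2}
\]
valid for $u\in\bigoplus_{k\geq 1}\Omega_{k}$. Since the $L^{2}_{x}H^{1}_{\theta}$ norm involves only $\|u\|$ and $\|Vu\|$, one does not need to control $\|X_{\bot}u\|$ or $\|(X+\lambda V)u\|$, which streamlines the argument compared with Proposition~\ref{prop:new}. The proof reuses Pestov's identity and the $\alpha$-control from Theorem~\ref{theorem32} to extract an $\alpha\|Vu\|^{2}$ contribution, and the Poincar\'e step is trivial here since $u$ has no zero Fourier mode.

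Next, set $f:=-(X+\lambda V)a_{1}$. A direct Fourier expansion using the hypothesis $\eta_{-}a_{1}=0$ gives $f=-i\lambda a_{1}-\eta_{+}a_{1}$, so $f$ is supported in Fourier degrees $1$ and $2$; in particular $f_{k}=0$ for $k\leq 0$. Consider the linear functional
\[
l\colon Q\!\left(\bigoplus_{k\geq 1}\Omega_{k}\right)\longrightarrow\mathbb{C},\qquad l(Qu):=\langle u,f\rangle.
\]
Well-definedness comes from the injectivity of $Q$ on $\bigoplus_{k\geq 1}\Omega_{k}$, implied by the mixed-norm estimate. Continuity of $l$ in the ambient $L^{2}$ norm is immediate from
\[
|l(Qu)|\leq \|u\|_{L^{2}_{x}H^{1}_{\theta}(SM)}\|f\|_{L^{2}_{x}H^{-1}_{\theta}(SM)}\leq C\|Qu\|_{L^{2}(SM)}\|a_{1}\|_{L^{2}(SM)},
\]
using the fact that $f$ is a sum of two components in fixed Fourier degrees. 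Hahn--Banach extends $l$ continuously to $L^{2}(SM)$, and Riesz representation yields $h\in L^{2}(SM)$, with $\|h\|_{L^{2}}\leq C\|a_{1}\|_{L^{2}}$, representing this extension.

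Finally, set $w:=V\Pi h+a_{1}$, where $\Pi$ denotes the projector onto the one-sided range $\bigoplus_{k\geq 2}\Omega_{k}$ (the one-sided analogue of the operator $T$ from Section~\ref{sec:surj}). A computation analogous to the proof of Theorem~\ref{theorem55}, using the identity $Q^{*}h=(X+\lambda V)V\Pi h$ and the defining relation for $h$, shows $(X+\lambda V)w=0$ as a distribution. The identity $w_{1}=a_{1}$ is automatic because $V\Pi h$ has no Fourier component in degree~$1$, and by construction the Fourier support of $w$ lies in $k\geq 1$. The norm bound $\|w\|_{L^{2}_{x}H^{-1}_{\theta}}\leq C\|a_{1}\|_{L^{2}}$ follows from the continuity of $h\mapsto V\Pi h$ as a map $L^{2}(SM)\to L^{2}_{x}H^{-1}_{\theta}(SM)$ combined with the estimate on $\|h\|_{L^{2}}$.

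The main technical obstacle is arranging the one-sided Fourier support. In Theorem~\ref{theorem55} the distribution produced naturally has Fourier components on both sides because $T$ is two-sided; enforcing support only in $k\geq 1$ requires replacing $T$ with the one-sided projector $\Pi$ and making a compatible choice of test-function space in the Hahn--Banach step. The hypothesis $\eta_{-}a_{1}=0$ is the essential ingredient that makes this work: it ensures $f$ is supported purely in positive Fourier modes, so the functional $l$ and its representer $h$ can be taken consistent with the one-sided framework without picking up a spurious obstruction in the non-positive modes.
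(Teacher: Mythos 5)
Your argument is correct, but takes a longer, more self-contained route than the paper. The paper's proof simply invokes Theorem~\ref{theorem55} with \(a_{-1}:=0\) to obtain an invariant distribution \(\tilde{w}=VTh+a_1\) (so \(\tilde{w}_0=\tilde{w}_{-1}=0\) automatically from its explicit form), then sets \(w:=\sum_{k\geq 1}\tilde{w}_k\) and checks directly that \(((X+\lambda V)w)_k=0\) for all \(k\); the only non-obvious cases are \(k=0\), which needs \(\eta_-\tilde{w}_1=\eta_-a_1=0\) (the hypothesis), and \(k=1\), which needs \(\tilde{w}_0=0\). The regularity bound is then immediate from \(w=V\bigl(\sum_{k\geq 2}h_k\bigr)+a_1\) and the \(L^2\) bound on \(h\) from Lemma~\ref{lemma54}. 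You instead rebuild the Hahn--Banach/Riesz machinery with one-sided projectors from the start, arriving at the same distribution \(w=V\Pi h+a_1\). Two observations on your route. First, the mixed-norm refinement of Proposition~\ref{prop:new} you propose is not actually needed: the inclusion \(\bigoplus_{k\geq 1}\Omega_k\subset\bigoplus_{|k|\geq 1}\Omega_k\) and the trivial bound \(\|u\|_{L^2_xH^1_\theta}\leq\|u\|_{L^2}+\|Vu\|_{L^2}\leq\|u\|_{H^1(SM)}\) show that Proposition~\ref{prop:new} already gives both injectivity of \(Q\) on the one-sided subspace and continuity of \(l\). Second, the genuinely delicate step, which you flag as the main technical obstacle but do not fully close, is proving that \(Q^*h=f\) holds as an identity of distributions, i.e.\ tested against all of \(C^\infty(SM)\) rather than only the one-sided subspace. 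This does work, but one must first replace \(h\) by its orthogonal projection onto the closure of \(\bigoplus_{k\geq 2}\Omega_k\) (this does not change the restricted functional \(l\)), and then observe that for arbitrary \(u\in C^\infty(SM)\) one has \(\langle Qu,h\rangle=\langle Qu_+,h\rangle\) because for \(k\geq 2\) the component \((Qu)_k\) depends only on \(u_{k-1},u_k,u_{k+1}\), all of index \(\geq 1\); together with the fact that \(f\) is supported in degrees \(1,2\) this yields \(\langle u,Q^*h\rangle=\langle u,f\rangle\) for every test function. In short, both routes produce the same \(w\); the paper's is a short reduction to the already-proved Theorem~\ref{theorem55} plus a Fourier-mode bookkeeping check, while yours re-derives the underlying functional analysis in a one-sided form.
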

\begin{proof} Define \(a_{-1}:=0\) and designate the distribution given by Theorem~\ref{theorem55} by \(\tilde{w}\). Now project this distribution onto its positive Fourier components to get \(w:=\sum^{\infty}_{k=1}\tilde{w}_{k}\). One can check that \(((X+\lambda V)w)_{k}=0\) for all \(k\), and thus \((X+\lambda V)w=0\). This computation uses the fact that \(\tilde{w}_{0}=0\), which follows because \(\tilde{w}\) has the particular form given in Theorem~\ref{theorem55}. Now \(w=V(\sum^{\infty}_{k=2}h_{k})+a_{1}\) and since \(\left\|h\right\|_{L^{2}(SM)}\leq C\left\|a_{1}\right\|_{L^{2}(SM)}\) we have the desired estimate.
\end{proof}

\begin{theorem} Let \((M,g)\) be a closed, oriented Riemannian surface. Given \(u,v\) distributions in \(SM\) of the form: \(u=\sum^{\infty}_{k=0}u_{k}\), \(v=\sum^{\infty}_{k=0}v_{k}\), where \(u\in L^{2}_{x}H_{\theta}^{-s}\), \(v\in L^{2}_{x}H_{\theta}^{-t}\) for some \(s,t\geq 0\). Define
\[w_{k}:=\sum^{k}_{j=0}u_{j}v_{k-j},\ \ k\in\mathbb{N}.\]
If \(N\in\mathbb{N}\) satisfies \(N>s+t+1/2\), then the sum \(\sum_{k=0}^{\infty}w_{k}\) converges in \(H^{-N-2}(SM)\) to some \(w\) with \(\left\|w\right\|_{H^{-N-2}}\leq C\left\|u\right\|_{L^{2}_{x}H_{\theta}^{-s}}\left\|v\right\|_{L^{2}_{x}H_{\theta}^{-t}}\). Furthermore, 
\[\left\|w_{k}\right\|_{L^{1}(SM)}\leq \left\langle k\right\rangle^{s+t}\left\|u\right\|_{L^{2}_{x}H_{\theta}^{-s}}\left\|v\right\|_{L^{2}_{x}H_{\theta}^{-t}}.\] If \((X+\lambda V)u=(X+\lambda V)v=0\), then \((X+\lambda V)w=0\). \label{distributionestimates}
\end{theorem}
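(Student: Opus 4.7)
The plan is to establish the three assertions in order. For the \(L^1\) bound on \(w_k\), I observe that \(u_j,v_{k-j}\in L^2(SM)\) implies their pointwise product lies in \(L^1(SM)\) with \(\|u_j v_{k-j}\|_{L^1}\leq \|u_j\|_{L^2}\|v_{k-j}\|_{L^2}\) by Cauchy-Schwarz. Writing \(\|u_j\|_{L^2}=\langle j\rangle^{s}\cdot\langle j\rangle^{-s}\|u_j\|_{L^2}\) and likewise for \(v\), invoking the elementary inequality \(\langle j\rangle^{s}\langle k-j\rangle^{t}\leq C\langle k\rangle^{s+t}\) (valid for \(0\leq j\leq k\)), and applying a second Cauchy-Schwarz in \(j\), yields the asserted bound
\[\|w_k\|_{L^{1}}\leq C\langle k\rangle^{s+t}\|u\|_{L^{2}_{x}H_{\theta}^{-s}}\|v\|_{L^{2}_{x}H_{\theta}^{-t}}.\]

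For convergence in \(H^{-N-2}\) I would argue by duality. Each \(u_j v_{k-j}\) sits in \(H_k\) (since \(V\) acts by \(ij\) on \(u_j\) and by \(i(k-j)\) on \(v_{k-j}\), so distributionally \(V(u_j v_{k-j})=ik u_j v_{k-j}\)), hence \(w_k\in H_k\); consequently for any \(\phi\in C^\infty(SM)\) one has \(|\langle w_k,\phi\rangle|=|\langle w_k,\phi_k\rangle|\leq\|w_k\|_{L^{1}}\|\phi_k\|_{L^\infty}\). The crucial input is a refined Sobolev bound
\[\|\phi_k\|_{L^\infty(SM)}\leq C\langle k\rangle^{-(N+1-\epsilon)}\|\phi_k\|_{H^{N+2}(SM)},\]
valid for any small \(\epsilon>0\), which exploits that \(\phi_k\) is concentrated in a single fiber Fourier mode: using \(V\phi_k=ik\phi_k\) to trade \(V\)-derivatives for factors of \(\langle k\rangle\) one can invoke the two-dimensional Sobolev embedding \(H^{1+\epsilon}(M)\hookrightarrow L^{\infty}(M)\) on the base, which is sharper than the naive three-dimensional embedding on \(SM\) would allow. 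Combined with Cauchy-Schwarz in \(k\) against the orthogonal decomposition \(\|\phi\|_{H^{N+2}}^{2}\approx\sum_k\|\phi_k\|_{H^{N+2}}^{2}\) (using a Sobolev norm defined via a Laplacian commuting with the circle action), this produces
\[\sum_k\|w_k\|_{L^1}\|\phi_k\|_{L^\infty}\leq C\|\phi\|_{H^{N+2}}\left(\sum_k\|w_k\|_{L^1}^{2}\langle k\rangle^{-2(N+1-\epsilon)}\right)^{1/2},\]
and the weighted sum converges precisely because the hypothesis \(N>s+t+1/2\) permits a choice of \(\epsilon\) with \(2(N+1-\epsilon)-2(s+t)>1\). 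The same estimate applied to tails shows the partial sums are Cauchy in \(H^{-N-2}\).

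For the invariance \((X+\lambda V)w=0\) I would verify \(((X+\lambda V)w)_k=0\) Fourier mode by Fourier mode. Writing \(((X+\lambda V)w)_k=\eta_{-}w_{k+1}+\eta_{+}w_{k-1}+ik\lambda w_k\), expanding each \(w_m=\sum_j u_j v_{m-j}\), and applying the Leibniz rule to \(\eta_{\pm}(u_j v_{m-j})\), the resulting double sum reorganizes after re-indexing into terms of the form \(((X+\lambda V)u)_a v_b\) and \(u_a((X+\lambda V)v)_b\) with \(a+b=k\), all of which vanish by hypothesis. A subtle point at the bottom of the Fourier ladder: \(u_j=0\) for \(j<0\) combined with the vanishing of \(((X+\lambda V)u)_{-1}\) and \(((X+\lambda V)u)_{0}\) forces \(\eta_{-}u_0=\eta_{-}u_1=0\) (and analogously for \(v\)), which is exactly what is needed to absorb the boundary contributions that appear after re-indexing. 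The principal technical obstacle I anticipate is justifying the Leibniz rule for \(\eta_{\pm}\) distributionally, since \(\eta_{\pm}u_j\) is only in \(H^{-1}\) while \(v_{k-j}\) is in \(L^{2}\); the cleanest remedy is to pair \((X+\lambda V)w\) against a smooth test function \(\phi\), push every derivative onto \(\phi\) via \(\eta_{\pm}^{*}=-\eta_{\mp}\) and \(V^{*}=-V\), and verify directly that the resulting absolutely convergent double sum over \(j,l\geq 0\) vanishes.
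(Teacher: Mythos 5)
Your proposal is correct, and on the one part of this theorem for which the paper supplies an argument at all --- the invariance \((X+\lambda V)w=0\) --- it coincides with the paper's proof: the paper verifies \(((X+\lambda V)w)_{k}=0\) mode by mode, applying the Leibniz rule and reindexing so that the terms reassemble into \(\sum_{j}((X+\lambda V)u)_{j+1}v_{k-j}+\sum_{j}u_{j}((X+\lambda V)v)_{k+1-j}\), and it uses exactly the bottom-of-the-ladder facts you isolate (\(\eta_{-}u_{0}=\eta_{-}u_{1}=0\), forced by \(u_{j}=0\) for \(j<0\) together with the vanishing of the \((-1)\)st and \(0\)th modes of \((X+\lambda V)u\), and likewise for \(v\)) to dispose of the modes \(k=-1,0\). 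For the convergence in \(H^{-N-2}\) and the \(L^{1}\) bound the paper gives no proof, simply citing \cite{paternain12}; your reconstruction of these estimates is sound and is essentially the standard argument (two applications of Cauchy--Schwarz for \(\left\|w_{k}\right\|_{L^{1}}\), then duality and trading \(V\)-derivatives for powers of \(\left\langle k\right\rangle\)). Two remarks on that reconstruction: your refined bound \(\left\|\phi_{k}\right\|_{L^{\infty}}\leq C\left\langle k\right\rangle^{-(N+1-\epsilon)}\left\|\phi_{k}\right\|_{H^{N+2}}\) is stronger than needed and hides a small uniformity issue (the modes \(\phi_{k}\) are sections of the \(k\)-th tensor powers of a line bundle, so the two-dimensional Sobolev constants must be checked to be independent of \(k\)); the cruder three-dimensional embedding \(\left\|\phi_{k}\right\|_{L^{\infty}(SM)}\leq C\left\|\phi_{k}\right\|_{H^{2}(SM)}\) combined with \(\sum_{k}\left\langle k\right\rangle^{2N}\left\|\phi_{k}\right\|^{2}_{H^{2}}\leq C\left\|\phi\right\|^{2}_{H^{N+2}}\) already yields the summability condition \(2(N-s-t)>1\), which is precisely the stated threshold \(N>s+t+1/2\). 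Finally, your caution about the distributional Leibniz rule is well placed: the paper's computation is purely formal (individual products such as \((\eta_{-}u_{j})v_{k-j}\) are not classically defined for \(L^{2}\) modes), and pairing \((X+\lambda V)w\) against a test function and moving all derivatives onto it, as you propose, is the clean way to make that computation rigorous.
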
  
\begin{proof} This result is proven in \cite{paternain12}. The only additional work required is a computation to verify the last statement. Obviously, \(((X+\lambda V)w)_{k}=0,\ \ \forall k\leq -2\). In addition,
\begin{align*}
\left(\left(X+\lambda V\right)w\right)_{-1}&= \eta_{-}w_{0}=(\eta_{-}u_{0})v_{0}+u_{0}(\eta_{-}v_{0})=0,\\
\left(\left(X+\lambda V\right)w\right)_{0} &= \eta_{-}w_{1}\\
                                           &= (\eta_{-}u_{1})v_{0}+u_{1}(\eta_{-}v_{0})+(\eta_{-}u_{0})v_{1}+u_{0}(\eta_{-}v_{1})=0.
\end{align*}
Furthermore, if \(k\geq 0\), then
\begin{align*}
\left(\left(X+\lambda V\right)w\right)_{k+1} = & \left(Xw\right)_{k+1}+\lambda V(w_{k+1})\\
                                             = &~ \eta_{+}w_{k}+\eta_{-}w_{k+2}+\lambda V(w_{k+1})\\
                                             = & \sum^{k}_{j=0}\left((\eta_{+}u_{j})v_{k-j}+u_{j}(\eta_{+}v_{k-j})\right)+\sum^{k+2}_{j=0}\left((\eta_{-}u_{j})v_{k+2-j}+u_{j}(\eta_{-}v_{k+2-j})\right)+\lambda \sum^{k+1}_{j=0} V\left(u_{j}v_{k+1-j}\right)\\
                                             = & \sum^{k}_{j=0}\left((\eta_{+}u_{j})v_{k-j}+u_{j}(\eta_{+}v_{k-j})\right)+\sum^{k+2}_{j=0}\left((\eta_{-}u_{j})v_{k+2-j}+u_{j}(\eta_{-}v_{k+2-j})\right)\\
                                               & + \sum^{k+1}_{j=0} \left( ij\lambda u_{j}v_{k+1-j} + i(k+1-j)\lambda u_{j}v_{k+1-j} \right)\\
                                             = & \sum^{k}_{j=0}(\eta_{+}u_{j})v_{k-j} + \sum^{k}_{j=0}u_{j}(\eta_{+}v_{k-j}) + \sum^{k+2}_{j=2}(\eta_{-}u_{j})v_{k+2-j} + \sum^{k}_{j=0}u_{j}(\eta_{-}v_{k+2-j})\\
                                               & + \sum^{k+1}_{j=1} ij\lambda u_{j}v_{k+1-j} + \sum^{k}_{j=0} i(k+1-j)\lambda u_{j}v_{k+1-j} \\
                                             = & \sum^{k}_{j=0}(\eta_{+}u_{j})v_{k-j} + \sum^{k}_{j=0}(\eta_{-}u_{j+2})v_{k-j} + \sum^{k}_{j=0} i(j+1)\lambda u_{j+1}v_{k-j} + \sum^{k}_{j=0}u_{j}(\eta_{+}v_{k-j})\\
                                               & + \sum^{k}_{j=0}u_{j}(\eta_{-}v_{k+2-j}) + \sum^{k}_{j=0} i(k+1-j)\lambda u_{j}v_{k+1-j}\\
                                             = & ~0.
\end{align*}
\end{proof}

\begin{proposition} Let \((M,g,\Omega)\) be a closed, oriented Riemannian surface equipped with an Anosov magnetic flow. Suppose \(q\in\Omega_{2}\) is in the linear span of \(\{ab: a,b\in\Omega_{1}\ \eta_{-}(a)=\eta_{-}(b)=0\}\). Then there exists \(w=\sum_{k=2}^{\infty}w_{k}\in H^{-5}(SM)\) such that \((X+\lambda V)w=0\), \(w_{2}=q\), \(\left\|w\right\|_{H^{-5}(SM)}\leq C\left\|q\right\|_{L^{2}(SM)}\).
\label{theorem92}
\end{proposition}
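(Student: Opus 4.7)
The plan is to combine Theorem~\ref{mixednormtheorem} with the distributional product estimate of Theorem~\ref{distributionestimates}. By linearity, it suffices to treat the case of a pure product $q = ab$ where $a, b \in \Omega_1$ satisfy $\eta_-(a) = \eta_-(b) = 0$; the general case follows by writing $q = \sum_{i=1}^{N} a_i b_i$, constructing $w^{(i)}$ for each term, and setting $w = \sum_i w^{(i)}$, with the estimate following from the triangle inequality on a fixed decomposition.

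For a single product $q = ab$, the first step is to apply Theorem~\ref{mixednormtheorem} to $a$ (which is legal since $\eta_- a = 0$) to produce a distribution
\[
u = \sum_{k=1}^{\infty} u_k \in L^2_x H^{-1}_\theta(SM)
\]
with $(X + \lambda V) u = 0$, $u_1 = a$, and $\|u\|_{L^2_x H^{-1}_\theta} \leq C \|a\|_{L^2}$. Apply the same theorem to $b$ to obtain an analogous $v = \sum_{k=1}^{\infty} v_k$ with $v_1 = b$.

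The second step is to multiply these two distributions using Theorem~\ref{distributionestimates} with $s = t = 1$. The theorem requires $N > s + t + 1/2 = 5/2$, so the minimal integer choice is $N = 3$, which lands the product $w := uv$ in $H^{-N-2}(SM) = H^{-5}(SM)$, with norm estimate
\[
\|w\|_{H^{-5}} \leq C \|u\|_{L^2_x H^{-1}_\theta} \|v\|_{L^2_x H^{-1}_\theta} \leq C \|a\|_{L^2} \|b\|_{L^2}.
\]
Crucially, the same theorem guarantees $(X + \lambda V) w = 0$ since both factors are annihilated by $X + \lambda V$. The Fourier profile of $w$ is then read off from the convolution formula $w_k = \sum_{j=0}^{k} u_j v_{k-j}$: because $u_j = v_j = 0$ for $j \leq 0$, we get $w_k = 0$ for $k \leq 1$ and $w_2 = u_1 v_1 = ab = q$, so $w = \sum_{k=2}^{\infty} w_k$ as required.

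I don't anticipate a serious obstacle here because the two key inputs do all the heavy lifting: Theorem~\ref{mixednormtheorem} provides the invariant distribution with the correct leading Fourier mode and the right mixed-norm regularity, while Theorem~\ref{distributionestimates} handles both the convergence of the product in $H^{-5}$ and the preservation of the kernel of $X + \lambda V$. The minor bookkeeping point is matching the regularity exponents so that $N = 3$ yields exactly the Sobolev index $-5$ in the statement, and the minor technical point is that the $L^2$-to-$L^2$ control of the norms depends on the chosen decomposition of $q$ into pure products, but this is harmless since any fixed decomposition yields the stated estimate with some constant $C$.
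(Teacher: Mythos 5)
Your proposal is correct and follows exactly the route the paper indicates: the paper's proof of Proposition~\ref{theorem92} simply cites the argument of Theorem 9.2 in \cite{paternain12} together with Theorems~\ref{distributionestimates} and \ref{mixednormtheorem}, which is precisely the pairing you carry out (apply Theorem~\ref{mixednormtheorem} to each holomorphic factor, multiply the resulting invariant distributions via Theorem~\ref{distributionestimates} with $s=t=1$, $N=3$, and read off $w_2 = u_1 v_1 = q$ from the convolution formula). The only place you are slightly informal is the passage from a pure product to the general $q$ in the linear span, but since $\ker(\eta_-)\cap\Omega_1$ is finite-dimensional the span in question is finite-dimensional, and fixing a basis of pure products makes the assignment $q\mapsto w$ linear on a finite-dimensional domain, so the norm bound with some constant $C$ is automatic.
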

\begin{proof} This follows from Theorem 9.2 in \cite{paternain12} using Theorems~\ref{distributionestimates} \& \ref{mixednormtheorem}.
\end{proof}

Recall the classical fact that a conformal class of Riemannian metrics on a surface determines a complex structure on the manifold. Also, a Riemann surface \(M\) is said to be \textit{hyperelliptic} if there exists a holomorphic map \(f:M\rightarrow S^{2}\) of degree two. Here the degree of a holomorphic map between compact Riemann surfaces is the sum of the multiplicities of the map at every point in the preimage of a fixed point in the codomain \cite{farkas}. When we speak of hyperellipticity of \((M,g)\) below it will always be with respect to the complex structure on \(M\) induced by the metric.

\begin{theorem} Let \((M,g,\Omega)\) be a closed, oriented, non-hyperelliptic Riemannian surface equipped with an Anosov magnetic flow. Suppose \(f\in C^{\infty}(SM)\) is of the form \(f=f_{-2}+f_{-1}+f_{0}+f_{1}+f_{2}\). If there exists \(u\in C^{\infty}(SM)\) such that \((X+\lambda V)u=f\), then \(u\) is of degree \(1\).
\end{theorem}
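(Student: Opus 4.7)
The plan is to follow the approach of \cite{paternain12} in the geodesic setting and reduce the problem to the degree-1 result of \cite{dairbekov05}. Concretely, suppose we can find $g_1\in\Omega_1$ and $g_{-1}\in\Omega_{-1}$ satisfying $\eta_+ g_1=f_2$ and $\eta_- g_{-1}=f_{-2}$. Setting $u':=u-g_1-g_{-1}\in C^{\infty}(SM)$, an elementary Fourier-component-by-Fourier-component computation shows that the degree $\pm 2$ pieces of $(X+\lambda V)u'$ vanish, leaving $(X+\lambda V)u'$ of degree at most $1$. The $m=1$ result of \cite{dairbekov05} then forces $u'$ to be of degree $0$, so $u=u'+g_1+g_{-1}$ is of degree $1$, as claimed.

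The heart of the proof is thus to establish $f_2\in\operatorname{im}(\eta_+|_{\Omega_1})$, and symmetrically $f_{-2}\in\operatorname{im}(\eta_-|_{\Omega_{-1}})$. For any $\mu\in\mathcal{D}'_{\operatorname{inv}}(SM)$, the smoothness of $u$ gives
\[\mu(f)=\mu((X+\lambda V)u)=-((X+\lambda V)\mu)(u)=0.\]
Proposition~\ref{theorem92} supplies, for every $q$ in the linear span of $\{ab:a,b\in\Omega_1,\ \eta_-a=\eta_-b=0\}$, an invariant distribution $\mu\in H^{-5}(SM)$ with $\mu_k=0$ for $k<2$ and $\mu_2=q$. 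Since $\mu_k=0$ for $k<2$ and $f_k=0$ for $k\geq 3$, the identity above collapses to the single orthogonality relation $(q,f_2)=0$. The classical theorem of Max Noether now enters: on a non-hyperelliptic Riemann surface of genus $\geq 3$, the multiplication map $\operatorname{Sym}^2 H^0(M,K)\to H^0(M,K^2)$ is surjective. Under the standard identification of $\ker(\eta_-|_{\Omega_k})$ with $H^0(M,K^{\otimes k})$, this says exactly that products of holomorphic $1$-forms span all of $\ker(\eta_-|_{\Omega_2})$. Thus $f_2\perp\ker(\eta_-|_{\Omega_2})$, and the orthogonal decomposition $\Omega_2=\ker(\eta_-|_{\Omega_2})\oplus\operatorname{im}(\eta_+|_{\Omega_1})$---valid because $\eta_+:\Omega_1\to\Omega_2$ is injective by Lemma~\ref{injectivity} and has closed range by standard elliptic theory---yields $f_2=\eta_+ g_1$ for some $g_1\in\Omega_1$. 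Applying the same argument to the complex conjugate distribution $\bar\mu$, whose Fourier support lies in $k\leq -2$, handles $f_{-2}$.

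The principal obstacle is pinning down the dictionary between the $\Omega_k$-formalism used throughout this paper and holomorphic sections of $K^{\otimes k}$, in order to legitimately invoke Max Noether. This is precisely where non-hyperellipticity is essential: on a hyperelliptic surface of genus $\geq 3$ the image of $\operatorname{Sym}^2 H^0(M,K)\to H^0(M,K^2)$ has positive codimension, so the orthogonality extracted from Proposition~\ref{theorem92} would be too weak to conclude $f_2\in\operatorname{im}(\eta_+)$ and the reduction would collapse. Once the identification and associated Hodge decomposition are in place, the remainder is formal bookkeeping.
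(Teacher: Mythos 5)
Your argument is correct and follows essentially the same route as the paper: decompose $f_{\pm 2}$ against $\ker\eta_\mp$ via the orthogonal (elliptic) splitting, kill the holomorphic part by pairing the invariant distribution from Proposition~\ref{theorem92} (with Max Noether and non-hyperellipticity) against the coboundary, and finish with the degree-$\leq 1$ injectivity result. The only differences are cosmetic: you pair the distribution directly with $f$ and treat $f_{-2}$ via the conjugate distribution, whereas the paper first subtracts $v_1$ with $f_2=\eta_+v_1+q_2$ and assumes $f,u$ real to handle the negative modes by conjugation symmetry.
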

\begin{proof} Without loss of generality one may assume that both \(f\) and \(u\) are real-valued (otherwise decompose \((X+\lambda V)u=f\) into real and complex components). This ensures that \(\overline{f_{k}}=f_{-k}\) and \(\overline{u_{k}}=u_{-k}\) for all \(k\).\\
\indent Since \(\eta_{+}\) is elliptic \cite{guillemin80} we have the following orthogonal decomposition:
\[\Omega_{2}=\eta_{+}(\Omega_{1})\oplus\mbox{ker}(\eta_{-}).\]
Now, \(f_{2}=\eta_{+}(v_{1})+q_{2}\), where \(q_{2}\in \mbox{ker}(\eta_{-})\) and \(v_{1}\in\Omega_{1}\). Therefore,
\[(X+\lambda V)v_{1}=\eta_{+}(v_{1})+\eta_{-}(v_{1})+i\lambda v_{1}=f_{2}-q_{2}+\eta_{-}(v_{1})+i\lambda v_{1}.\]
Moreover,
\[(X+\lambda V)(u-v_{1})=f_{-2}+f_{-1}+f_{0}+f_{1}+q_{2}-\eta_{-}(v_{1})-i\lambda v_{1}.\]

Since \(M\) is not hyperelliptic, we can invoke Max Noether's Theorem \cite{farkas}, which guarantees that for \(m\geq 2\) the \(m\)-fold products of holomorphic differentials span the space of holomorphic \(m\)-differentials. Using this result with \(m=2\) in conjunction with Lemma~\ref{injectivity} shows that \(q_{2}\) lies in the linear span of the set of products \(a_{1}b_{1}\) where \(a_{1},b_{1}\in\Omega_{1}\) and \(\eta_{-}a_{1}=\eta_{-}b_{1}=0\). Now Propostion~\ref{theorem92} ensures that there exists an invariant distribution \(w=\sum^{\infty}_{k=2}w_{k}\) with \(w_{2}=q_{2}.\) Since \(u-v_{1}\in C^{\infty}(SM)\), we may apply \(w\) to yield:
\[0=\left\langle w, (X+\lambda V)(u-v_{1})\right\rangle=\left\langle w_{2},q_{2}\right\rangle=\left\|q_{2}\right\|^{2}_{L^{2}(SM)}.\]
Thus, \(q_{2}=0\). We also have \(f_{-2}=\overline{f_{2}}=\eta_{-}(\overline{v_{1}}).\)\\
Therefore,
\begin{align*}
(X+\lambda V)(u-v_{1}-\overline{v_{1}}) =& f_{-2}+f_{-1}+f_{0}+f_{1}-\eta_{-}(v_{1})-i\lambda v_{1}-\eta_{+}(\overline{v_{1}})-\eta_{-}(\overline{v_{1}})+i\lambda \overline{v_{1}}\\
                                  =& f_{-1}+f_{0}+f_{1}-\eta_{-}(v_{1})-i\lambda v_{1}-\eta_{+}(\overline{v_{1}})+i\lambda \overline{v_{1}}\in\Omega_{-1}\oplus\Omega_{0}\oplus\Omega_{1}.
\end{align*}
But now thanks to the injectivity of the magnetic ray transform on functions and \(1\)-forms \cite{dairbekov207}, we can deduce that \(u-v_{1}-\overline{v_{1}}\) is of degree \(0\), and thus \(u\) itself is of degree \(1\). 
\end{proof}

\begin{proof}[Proof of Theorem~\ref{anosovi2}] Using the above result this theorem now follows from the argument in \cite{paternain12}.
\end{proof}

\section{Entropy Production of Perturbed Anosov Magnetic Flows}

We first introduce the following concept:
\begin{definition} Suppose that \(\psi_{t}\) is a transitive, weakly mixing Anosov flow on a smooth manifold \(N\) and \(\mu\) is a Gibbs state associated to some H\"{o}lder continuous potential. We define the variance of \(F\), any H\"{o}lder continuous function, with respect to \(\mu\) as:
\[\operatorname{Var}_{\mu}(F):=\lim_{T\rightarrow \infty}\frac{1}{T}\int_{N}\left(\int^{T}_{0}F\circ\psi_{t}-\overline{F}\right)^{2}d\mu,\]
where \[\overline{F}:=\int_{N}Fd\mu.\]
\end{definition}

From \cite{pollicott94} we can equivalently express the variance as 
\[\operatorname{Var}_{\mu}(F)=\int^{\infty}_{-\infty}\sigma_{F}(t)dt=2\int^{\infty}_{0}\sigma_{F}(t)dt,\]
where \[\sigma_{F}(t):=\int_{N}(F\circ\psi_{t}\cdot F-\overline{F}^{2})d\mu.\]
Moreover it has been shown \cite{pollicott85,ruelle1} that the Fourier transform of \(\sigma_{F}\) is defined for \(w\) such that \(\operatorname{Im}(w)>0\), and extends meromorphically near \(w=0\), so we have: 
\[\operatorname{Var}_{\mu}(F)=2\lim_{w\rightarrow 0}\int^{\infty}_{0}e^{iwt}\int_{M}(F\circ\psi_{t}\cdot F-\overline{F}^{2})d\mu.\]

\begin{proof}[Proof of Theorem~\ref{entropy}] A short computation yields 
\[\operatorname{div}(X+(\lambda +sq)V)=sV(q).\] Therefore, by definition we have 
\[ e(s)= -s\int_{SM} V(q)\,d\rho_{s}.\]
Observe that \(\rho_{0}\) is the Liouville measure on \(SM\) and 
\[e'(0)=-\int_{SM} V(q)d\rho_{0}=0.\]
Moreover,
\[e''(0)=-2\left.\frac{d}{ds}\right|_{s=0}\int_{SM} V(q)d\rho_{s}.\]
From the work of \cite{ruelle3} we may compute this last integral. Suppose \(F\in C^{\infty}(SM)\), we have that
\[\left.\frac{d}{ds}\right|_{s=0}\int_{SM} F d\rho_{s}\]
is given by the limit as \(w\rightarrow 0\) (with \(\operatorname{Im}(w)>0\)) of
\[\int^{\infty}_{0}e^{iwt}\int_{SM}d(F\circ\varphi_{t})_{(x,v)}(qV)(x,v)d\rho_{0}(x,v)=-\int^{\infty}_{0}e^{iwt}\int_{SM}\operatorname{div}(qV)(x,v)F(\varphi_{t}(x,v))d\rho_{0}(x,v).\]
Therefore, 
\[e''(0)=2\lim_{w\rightarrow 0}\int^{\infty}_{0}e^{iwt}\int_{SM}V(q)(x,v)V(q)(\varphi_{t}(x,v))d\rho_{0}(x,v),\]
which is none other than the variance of \(V(q)\) (considered as a function on \(SM\)) with respect to the Liouville measure \(\rho_{0}\). That is,
\[e''(0)=\operatorname{Var}_{\rho_{0}}(V(q)).\]
Now \(\operatorname{Var}_{\rho_{0}}(F)\geq 0\) with equality iff \(F\) is a coboundary of the magnetic flow \cite{pollicott94}. Therefore, \(e'(0)=0\) and \(e''(0)\geq 0\) with equality iff there exists \(u\in C^{\infty}(SM)\) such that \((X+\lambda V)u=V(q)\). Now by Theorem~\ref{anosovi2} we know that this occurs iff \(u\) has degree \(1\) in the fibre variable, hence \(V(q)\) is potential. 
\end{proof}

\subsection*{Acknowledgements.}
The author wishes to thank his advisor, Gabriel Paternain, for all his encouragement and support. Many thanks also to Hanming Zhou who alerted the author to an error in an earlier version of this article.

\newpage

\end{document}